\documentclass[amstex,12pt]{amsart}
\usepackage{mathrsfs}
\usepackage{amssymb}
\usepackage{amsfonts}
\usepackage{amsfonts,amsmath,amsthm, amssymb}
\usepackage{array}
\usepackage{latexsym, euscript, epic, eepic}
\usepackage{tikz}
\usetikzlibrary{matrix}

\setlength{\oddsidemargin}{0cm}
\setlength{\evensidemargin}{0cm}
\setlength{\textwidth}{16cm}
\setlength{\footskip}{30pt}

\newtheorem{theorem}{Theorem}[section]
\newtheorem{lemma}[theorem]{Lemma}
\newtheorem{proposition}[theorem]{Proposition}
\newtheorem{corollary}[theorem]{Corollary}
\newtheorem{remark}[theorem]{Remark}
\newtheorem*{definition}{Definition}
\newtheorem*{claim}{Claim}

\begin{document}
 
\title[BMO Teich\-m\"ul\-ler spaces]{BMO Teich\-m\"ul\-ler spaces and their quotients \\with complex and metric structures} 

\author[H. Wei]{Huaying Wei} 
\address{Department of Mathematics and Statistics, Jiangsu Normal University \endgraf Xuzhou 221116, PR China} 
\email{hywei@jsnu.edu.cn} 

\author[K. Matsuzaki]{Katsuhiko Matsuzaki}
\address{Department of Mathematics, School of Education, Waseda University \endgraf
Shinjuku, Tokyo 169-8050, Japan}
\email{matsuzak@waseda.jp}

\subjclass[2010]{Primary 30F60, 30C62, 30C20; Secondary 28A75, 37E30, 58D05}
\keywords{BMO Teich\-m\"ul\-ler spaces, Strongly quasisymmetric homeomorphism, Carleson measure, Chord-arc curve, quotient Bers embedding,
invariant metric, Finsler structure}
\thanks{Research supported by the National Natural Science Foundation of China (Grant No. 11501259)
and Japan Society for the Promotion of Science (KAKENHI 18H01125).}

\begin{abstract}
The paper presents some recent results on the BMO Teich\-m\"ul\-ler space, its subspaces and quotient spaces. We first consider the chord-arc curve subspace and prove that every element of the BMO Teich\-m\"ul\-ler space is represented by its finite composition. Moreover, we show that these BMO Teich\-m\"ul\-ler spaces have affine foliated structures induced by the VMO Teich\-m\"ul\-ler space. By which, their quotient spaces have natural complex structures modeled on the quotient Banach space. Then, a complete translation-invariant metric is introduced on the BMO Teich\-m\"ul\-ler space and is shown to be a continuous Finsler metric in a special case.
\end{abstract}

\maketitle

\section{Introduction}

The Teich\-m\"ul\-ler space is originally a universal classification space of the complex structures on a surface of given quasiconformal type, but according to complex analytic objects we focus on, we can also consider various kinds of Teich\-m\"ul\-ler spaces. The universal Teich\-m\"ul\-ler space plays a role of their ambient space, and its intrinsic natures (complex structures and invariant metrics) dominate any included Teich\-m\"ul\-ler spaces. For instance, the Teich\-m\"ul\-ler space of a Riemann surface can be represented in the universal Teich\-m\"ul\-ler space as the fixed point locus of the Fuchsian group. In a different direction to this, Teich\-m\"ul\-ler spaces in our study are obtained by adding a certain regularity to ingredients of the space. Recently, this type of Teich\-m\"ul\-ler spaces become more popular as a branch of infinite dimensional Teich\-m\"ul\-ler theory. 

The Bers model of the universal Teich\-m\"ul\-ler space $T$ is defined by the Schwarzian derivative $\mathcal{S}(f|_{\mathbb{D}^{*}})$ of the conformal homeomorphism $f$ of the exterior of the unit disk $\mathbb{D}^{*}$ that is quasiconformal on the unit disk $\mathbb{D}$. In this way, $T$ is embedded in a certain Banach space as a bounded domain. The image $\Gamma$ of the unit circle $\mathbb{S}$ under $f$ is called a quasicircle. The universal Teich\-m\"ul\-ler space $T$ can be also characterized as the set of all quasicircles up to a M\"obius transformations of the Riemann sphere $\widehat{\mathbb{C}}$. Let $\Omega$ denote the inner domain of $\Gamma$, and let moreover $g$ be a Riemann map of $\mathbb{D}$ onto $\Omega$. We define the conformal welding homeomorphism $h$ with respect to $\Gamma$ by $h = (g|_{\mathbb{S}})^{-1}\circ (f|_{\mathbb{S}})$, which is quasisymmetric. The universal Teich\-m\"ul\-ler space $T$ is identified with the group $\rm QS$ of quasisymmetric self-homeomorphisms $h$ of $\mathbb{S}$ modulo the group $\mbox{\rm M\"ob}(\mathbb{S})$ of M\"obius transformations of $\mathbb{S}$, i.e., $T = \mbox{\rm M\"ob}(\mathbb{S})\backslash {\rm QS}$. In general, the quasisymmetric homeomorphism $h$ does not satisfy any regularity conditions such as  absolute continuity. As well, the quasicircle $\Gamma$ might not even be rectifiable. In fact, its Hausdorff dimension, though less than $2$, can be arbitrarily close to $2$ (see \cite{BA, Bi, Se88}). 

The BMO theory has been studied often in the framework of Teich\-m\"ul\-ler theory. The corresponding subspaces of $T$ have generally satisfactory characteristics in terms of the quasicircle $\Gamma$ and the quasisymmetric homeomorphism $h$ (see \cite{AZ, Se, SW}). In this paper, we shall continue to study the BMO theory of the universal Teich\-m\"ul\-ler space, because of its great importance in the application to the harmonic analysis (see \cite{Da, FKP, Jo, Se}) and also of its own interest. We will especially focus on BMO Teich\-m\"ul\-ler spaces, the subspaces of the universal Teich\-m\"ul\-ler space $T$ closely related to BMO functions, Carleson measures and $A_{\infty}$ weights. In Section 2, we survey the standard theory of the universal Teich\-m\"ul\-ler space and BMO Teich\-m\"ul\-ler spaces. 

Basic problems are considered by going back and forth between the quasicircle $\Gamma$ and the conformal welding homeomorphism $h$ corresponding to $\Gamma$. It is known that the set $\rm SQS$ of all strongly quasisymmetric homeomorphisms of $\mathbb{S}$, which correspond to Bishop-Jones  quasicircles, forms a partial topological group under the BMO topology; the neighborhood base is given at the identity by using the BMO norm and is distributed at every point $h \in \rm SQS$ by the right translation. It is proved in \cite{We} that its characteristic topological subgroup $\rm SS$ consists of strongly symmetric homeomorphisms, which correspond precisely to asymptotically smooth curves in the sense of Pommerenke. We consider intermediately the set $\rm CQS$ of conformal welding homeomorphisms with respect to chord-arc curves $\Gamma$. In Section 3, we prove that every element of $\rm SQS$ can be represented as a finite composition of elements in $\rm CQS$ (Theorem \ref{represented}). As a consequence, we see that $\rm CQS$ does not carry a group structure under the composition (Corollary \ref{not}).

The Bers embedding of the universal Teich\-m\"ul\-ler space $T$ is a map into the Banach space of bounded holomorphic quadratic differentials. Affine foliated structures of $T$ and the quotient Bers embeddings are induced by its subspaces. This was first investigated by Gardiner and Sullivan \cite{GS} for the little subspace $T_0$, which consists of the asymptotically conformal elements of $T$. Later, it was proved that the Bers embedding is compatible with the coset decomposition $T_0\backslash T$ and the quotient Banach space. By which the complex structure modeled on the quotient Banach space is provided for $T_0\backslash T$ through the quotient Bers embedding. In Sections 4 and 5, we show two examples of affine foliated structures of the BMO Teich\-m\"ul\-ler spaces $T_c=\mbox{\rm M\"ob}(\mathbb{S})\backslash \rm CQS$ and $T_b=\mbox{\rm M\"ob}(\mathbb{S})\backslash \rm SQS$ (Corollary \ref{affine}, Theorem \ref{foliated}) and the injectivity of their quotient Bers embeddings by $T_v=\mbox{\rm M\"ob}(\mathbb{S})\backslash \rm SS$ (Corollary \ref{quotient}). 

In Section 6, a new invariant metric $m_C$ under the right translation is introduced on the BMO Teich\-m\"ul\-ler space by using the Carleson norm. We call this the Carleson metric. This is shown to be a continuous Finsler metric in the special case for $T_v$ (Theorem \ref{cont}). Moreover, the Carleson metric $m_C$ induces a quotient metric on the quotient BMO Teich\-m\"ul\-ler space. Then, a list of intended results is presented in this section, following the work on the asymptotic Teich\-m\"ul\-ler space $T_0\backslash T$ by Earle, Gardiner and Lakic \cite{EGL}. In the following Section 7, we show that the Carleson distance induced by $m_C$ is complete in the BMO Teich\-m\"ul\-ler spaces (Theorem \ref{complete}). We also compare the Carleson distance with the Teich\-m\"ul\-ler distance and the Kobayashi distance. 

One of our motivations to study those structures of BMO Teich\-m\"ul\-ler spaces is to consider an open problem of the connectivity of the  chord-arc curve subspace (see \cite{AZ}). The topology on this space is induced by the BMO norm of the conformal welding homeomorphisms.
The distribution of the chord-arc curve subspace $T_c$ in the BMO Teich\-m\"ul\-ler space $T_b$ (Theorem \ref{Tc}) can translate the problem of the connectivity to the quotient $T_v \backslash T_c$. By introducing the (quotient) Carleson metric in this space,
we can investigate a certain convexity of the chord-arc curve subspace to consider the problem.

\section{Preliminaries}

In this section, we review basic facts on the universal Teich\-m\"ul\-ler space (see \cite{Ah66, Le, Na88}) and the BMO theory of the universal Teich\-m\"ul\-ler space (see \cite{AZ, Se, SW}).

The universal Teich\-m\"ul\-ler space $T$ is defined as the group $\rm QS$ of all quasi\-symmetric homeo\-morphisms of the
unit circle ${\mathbb S}=\{z \mid |z|=1\}$ modulo the left action of
the group $\mbox{\rm M\"ob}(\mathbb{S})$ of all M\"obius transformations of $\mathbb{S}$, i.e., $T=\mbox{\rm M\"ob}(\mathbb{S}) \backslash \rm QS$.
A topology of $T$ can be defined by quasisymmetry constants of quasi\-symmetric homeo\-morphisms.
The universal Teich\-m\"ul\-ler space $T$ can be also defined by using quasiconformal homeo\-morphisms of the unit disk 
$\mathbb{D}=\{z \mid |z|<1\}$ with complex dilatations $\mu$
in the space of Beltrami coefficients 
$$
M(\mathbb D)=\{\mu \in L^\infty(\mathbb D) \mid \Vert \mu \Vert_\infty<1\}.
$$
Then, $T$ is the quotient space of $M(\mathbb D)$ under the Teich\-m\"ul\-ler equivalence.
The topology of $T$ coincides with the quotient topology induced by the projection $\pi:M(\mathbb D) \to T$.

The universal Teich\-m\"ul\-ler space $T$ is identified with a domain in the Banach space 
$$
B(\mathbb{D}^{*})=\{\varphi(z)dz^2 \mid \Vert \varphi \Vert_B=\sup_{z \in \mathbb D^*} (|z|^2-1)^2|\varphi(z)|<\infty\}
$$ 
of bounded holomorphic quadratic differentials on 
$\mathbb{D}^{*}=\widehat{\mathbb C}-\overline{\mathbb{D}}$ under the Bers embedding $\beta: T \to B(\mathbb{D}^{*})$.
This map is given by the factorization of a map $\Phi:M(\mathbb D) \to B(\mathbb{D}^{*})$ by the projection $\pi$, i.e.,
$\beta \circ \pi=\Phi$. Here, for every $\mu \in  M(\mathbb D)$, $\Phi(\mu)$ is defined by the Schwarzian derivative 
${\mathcal S}(f_\mu|_{\mathbb D^*})$
of the conformal homeomorphism $f_\mu$ of $\mathbb D^*$ that is quasiconformal on $\mathbb D$ with the complex dilatation $\mu$.
The Bers embedding $\beta: T \to B(\mathbb{D}^{*})$ is a homeomorphism onto the image $\beta(T)=\Phi(M(\mathbb D))$, 
and it defines a complex structure of $T$
as a domain in the Banach space $B(\mathbb{D}^{*})$.
It is proved that $\Phi$ (and so is $\pi$) is a holomorphic split submersion from $M(\mathbb{D})$ onto its image.

The space $M(\mathbb D)$ of Beltrami coefficients and the universal Teich\-m\"ul\-ler space $T$ are equipped with
a group structure. This can be easily seen by normalizing the elements of quasiconformal self-homeomorphisms of $\mathbb D$ and
quasisymmetric self-homeomorphisms of $\mathbb S$. The normalization is defined by fixing three distinct points (e.g., $1,i,-1$) of $\mathbb S$.
Then, $M(\mathbb D)$ is identified with the group of all normalized quasiconformal self-homeomorphisms of $\mathbb D$, and
$T$ is identified with the group of all normalized quasisymmetric self-homeo\-morphisms of $\mathbb S$.
The operation on the groups $M(\mathbb D)$ and $T$ is denoted by $\ast$. 
For every $\mu \in M(\mathbb D)$, the normalized quasiconformal self-homeomorphism of $\mathbb D$ 
with the complex dilatation $\mu$
(and its quasisymmetric extension to $\mathbb S$) is denoted by $f^{\mu}$. 
Then, the operation $\ast$ is defined by the relation $f^{\mu_1} \circ f^{\mu_2}=f^{\mu_1 \ast \mu_2}$.

For every $\nu \in M(\mathbb D)$, the right translation 
$r_\nu:M(\mathbb D) \to M(\mathbb D)$ is defined by $r_\nu(\mu)=\mu \ast \nu^{-1}$.
For every $\tau \in T$, the right translation 
$R_\tau:T \to T$ is defined by $R_\tau(\sigma)=\sigma \ast \tau^{-1}$.
Both $r_\nu$ and $R_\tau$ are biholomorphic automorphisms of $M(\mathbb D)$ and $T$, respectively.
Moreover, for $\tau=\pi(\nu)$, we have $R_\tau \circ \pi=\pi \circ r_\nu$.

A quasisymmetric homeomorphism $h \in \rm QS$ is called strongly quasisymmetric if
for any $\varepsilon>0$ there is some $\delta>0$ such that for any
arc $I \subset \mathbb{S}$ 
and any Borel set $E \subset I$,
$|E|\leqslant \delta |I|$ implies that $|h(E)| \leqslant \varepsilon |h(I)|$.
It should be noted that each $h$ is absolutely continuous and 
$\log h'$ is in ${\rm BMO}(\mathbb S)$. Here, a locally integrable function $\phi$ on $\mathbb S$ belongs to ${\rm BMO}(\mathbb S)$ if
$$
\Vert \phi \Vert_{\rm BMO}=\sup_{I \subset \mathbb S} \frac{1}{|I|} \int_I |\phi-\phi_I|\,\frac{d\theta}{2\pi}<\infty,
$$
where the supremum is taken over all arcs $I$ on $\mathbb S$, $|I| = \int_I d\theta/2\pi$ is the length of $I$, 
and $\phi_I$ denotes the average of $\phi$ over $I$.
We denote by $\rm SQS$ the group of all strongly quasisymmetric homeomorphisms. 
We assign the following BMO distance to $\rm SQS$: 
$d_{\rm BMO}(h_1,h_2)=\Vert \log h_1'-\log h_2' \Vert_{\rm BMO}$. 
The BMO Teich\-m\"ul\-ler space is defined by $T_b=\mbox{\rm M\"ob}(\mathbb{S}) \backslash \rm SQS$, which is equipped with a topology induced by
the BMO distance.

As in the case of the universal Teich\-m\"ul\-ler space, the BMO Teich\-m\"ul\-ler space $T_b$ has
the corresponding space for Beltrami coefficients.
For a simply connected domain $\Omega$ in the Riemann sphere $\widehat{\mathbb C}$ with $\infty \notin \partial \Omega$,
a measure $\lambda \doteq \lambda(z)dxdy$ on $\Omega$ is called a Carleson measure if 
$$
\Vert \lambda \Vert_c =\sup\Big\{ \frac{1}{r} \int_{|z-\zeta| < r,\ z \in \Omega} \lambda(z)dxdy \mid \zeta \in \partial \Omega,\ 0 < r < \rm diameter(\partial\Omega)\Big\}
$$
is finite. We denote the set of all Carleson measures on $\Omega$ by $ \rm CM(\Omega)$.
For any $\mu \in L^\infty(\mathbb D)$ and for the Poincar\'e density $\rho_{\mathbb D}(z)=(1-|z|^2)^{-1}$ (with curvature constant equal to $-4$) on $\mathbb{D}$, we set 
$$
\lambda_\mu(z)dxdy=|\mu(z)|^2 \rho_{\mathbb D}(z)dxdy.
$$ 
Then, a linear subspace ${\mathcal L}(\mathbb D) \subset L^\infty(\mathbb D)$ consisting of all $\mu$ with $\lambda_\mu \in \rm CM(\mathbb D)$
is a Banach space
with a norm 
$\Vert \mu \Vert_*=\Vert \mu \Vert_\infty+\Vert \lambda_\mu \Vert_c^{1/2}$.
We define ${\mathcal M}(\mathbb D)={\mathcal L}(\mathbb D) \cap M(\mathbb D)$. This corresponds to $T_b$
in such a way that $T_b$ is the image of ${\mathcal M}(\mathbb D)$ under the projection
$\pi:M(\mathbb D) \to T$
and the topology on $T_b$ induced from ${\mathcal M}(\mathbb D)$ by $\pi$
coincides with the topology on $T_b$ induced from the BMO distance.

There is also a subspace of bounded quadratic differentials corresponding to $T_b$.
For $\varphi \in B(\mathbb D^*)$, another norm is given by
$$
\Vert \varphi \Vert_{\mathcal B}=\Vert |\varphi(z)|^2\rho_{\mathbb D^*}^{-3}(z)dxdy \Vert_c^{1/2}
$$
as a Carleson measure on $\mathbb D^*$,
where $\rho_{\mathbb D^*}(z)=(|z|^2-1)^{-1}$ is
the Poincar\'e density on $\mathbb D^*$. We consider the Banach space
$\mathcal{B}(\mathbb D^*) \subset B(\mathbb D^*)$ consisting of all such elements $\varphi$ that 
$|\varphi(z)|^2\rho_{\mathbb D^*}^{-3}(z)dxdy \in \rm CM(\mathbb D^*)$
equipped with this norm. Then, it was proved in \cite[Theorem 5.1]{SW} that
$\Phi$ restricted to ${\mathcal M}(\mathbb D)$ is a holomorphic split submersion to $\mathcal{B}(\mathbb D^*)$ and
the Bers embedding $\beta$ of $T_b$ is a homeomorphism onto the domain 
$\beta(T_b)=\Phi({\mathcal M}(\mathbb D))$ in $\mathcal{B}(\mathbb D^*)$.
In particular, $T_b$ has a complex structure modeled on $\mathcal{B}(\mathbb D^*)$.

\section{Chord-arc curves do not have the group structure}
Let $\Gamma$ be a Jordan curve in the Riemann sphere $\widehat{\mathbb C}$, 
let $\Omega$ and $\Omega^{*}$ denote its inner and outer domains
in $\widehat{\mathbb C}$, respectively, and let $g$ and $f$ be conformal maps of $\mathbb{D}$ 
and $\mathbb{D}^{*}$ onto $\Omega$ and $\Omega^{*}$, respectively. We define the conformal welding homeomorphism $h$ with respect to $\Gamma$ by 
$h = (g|_{\mathbb{S}})^{-1}\circ (f|_{\mathbb{S}})$. 

A rectifiable Jordan curve $\Gamma$ in the complex plane $\mathbb C$ is called a chord-arc curve if 
$l_{\Gamma}(z_1, z_2) \leqslant K|z_1 - z_2|$ for any $z_1, z_2 \in \Gamma$, where $l_{\Gamma}(z_1, z_2)$ denotes 
the Euclidean length of the shorter arc of $\Gamma$ between $z_1$ and $z_2$. The smallest such $K$ is called the chord-arc constant for $\Gamma$. 
It is a well-known fact that a chord-arc curve is the image of $\mathbb{S}$ under 
a bi-Lipschitz homeomorphism $f$ of $\mathbb{C}$.
That is, there exists a homeomorphism $f: \mathbb{C} \to \mathbb{C}$ with a constant $C \geqslant 1$ such that 
$f(\mathbb{S}) = \Gamma$ and 
$C^{-1}|z - w| \leqslant |f(z) - f(w)| \leqslant C|z - w|$ for all $z, w \in \mathbb{C}$. 
When $\Gamma$ is a Jordan curve passing through $\infty$, we may replace the Euclidean distance in the definition above with
the spherical distance in order to define $\Gamma$ to be a chord-arc curve.
Bi-Lipschitz homeomorphisms preserve the Hausdorff dimension, and hence Hausdorff dimensions of chord-arc curves are one.

Although chord-arc curves are in a very special class of quasicircles, no characterization has been found in terms of their conformal welding homeomorphisms of $\mathbb{S}$. We denote the set of all these conformal welding homeomorphisms by $\rm CQS$. 
It is known that if $h \in \rm CQS$ then $h \in \rm SQS$ (see \cite{AZ}), that is, $h$ is strongly quasisymmetric, and in particular,
$\Vert \log h' \Vert_{\rm BMO}<\infty$. Conversely, there exists some constant $c>0$ such that if $\Vert \log h' \Vert_{\rm BMO}<c$
then $h \in \rm CQS$ and the corresponding $\Gamma$ is a chord-arc curve with the chord-arc constant $K$ sufficiently close to $\pi/2$.

In this section, we prove that every element of $\rm SQS$ can be represented as a finite composition of
elements in ${\rm CQS}$. As a consequence, we see that ${\rm CQS}$
does not carry a group structure under the composition. 
We state our results in the framework of Teich\-m\"ul\-ler theory. The chord-arc curve space is identified with
a subspace $T_c$ of
the BMO Teich\-m\"ul\-ler space $T_b$, which is given by
the set ${\rm CQS}$ modulo $\mbox{\rm M\"ob}(\mathbb{S})$, i.e., 
$T_c = \mbox{\rm M\"ob}(\mathbb{S}) \backslash {\rm CQS} \subset T_b$.
By regarding $T_c$ as a subset of the group $(T_b,\ast)$,
we can think of the inverse and the composition of elements of $T_c$.

For the proof of the main result in this section, we first claim that
$T_c$ (or ${\rm CQS}$) is preserved under the inverse.

\begin{proposition}\label{inverse}
The inverse element $\tau^{-1}$ belongs to $T_c$ for every $\tau \in T_c$.
\end{proposition}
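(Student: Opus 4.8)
The plan is to reduce the statement to the following claim about conformal weldings: if $h \in {\rm CQS}$ is the welding of a chord-arc curve $\Gamma$, then $h^{-1}$ is again the welding of a chord-arc curve, so that $h^{-1} \in {\rm CQS}$. Since a Jordan curve contained in $\mathbb{C}$ is compact, I may assume $\Gamma \subset \mathbb{C}$ is bounded. Write $g \colon \mathbb{D} \to \Omega$ and $f \colon \mathbb{D}^{*} \to \Omega^{*}$ for the inner and outer conformal maps, so that $h = (g|_{\mathbb{S}})^{-1}\circ (f|_{\mathbb{S}})$ and hence $h^{-1} = (f|_{\mathbb{S}})^{-1}\circ (g|_{\mathbb{S}})$. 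Passing from $h$ to $h^{-1}$ interchanges the roles of the inner and outer maps; geometrically this should correspond to interchanging the inner and outer domains of $\Gamma$, which I will realize by a conformal inversion of $\widehat{\mathbb C}$ followed by a reflection.

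Concretely, I fix a point $p \in \Omega$ and set $M(w) = 1/(w-p)$. Since $p \in \Omega$ and $\infty \in \Omega^{*}$, the transformation $M$ sends the unbounded outer domain $\Omega^{*}$ to the bounded component $M(\Omega^{*})$ and the inner domain $\Omega$ to the unbounded component $M(\Omega)$; thus $M(\Gamma)$ has inner domain $M(\Omega^{*})$ and outer domain $M(\Omega)$. Writing $\iota(z)=1/z$ for the conformal involution that exchanges $\mathbb{D}$ and $\mathbb{D}^{*}$ and restricts to $z \mapsto \bar z$ on $\mathbb{S}$, the maps $M\circ f\circ\iota \colon \mathbb{D}\to M(\Omega^{*})$ and $M\circ g\circ\iota \colon \mathbb{D}^{*}\to M(\Omega)$ are the inner and outer conformal maps of $M(\Gamma)$. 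A short computation then gives that the welding of $M(\Gamma)$ is $(M f \iota|_{\mathbb{S}})^{-1}\circ(M g \iota|_{\mathbb{S}}) = \iota\circ (f|_{\mathbb{S}})^{-1}\circ(g|_{\mathbb{S}})\circ\iota = \sigma\circ h^{-1}\circ\sigma$, where $\sigma(z)=\bar z = \iota|_{\mathbb{S}}$. Finally I apply the reflection $\sigma$ to the whole curve: for any Jordan curve $X$ the welding of $\overline{X}=\sigma(X)$ is $\sigma\circ(\text{welding of }X)\circ\sigma$, so the welding of $\Gamma' := \overline{M(\Gamma)}$ equals $\sigma\circ(\sigma\circ h^{-1}\circ\sigma)\circ\sigma = h^{-1}$, using $\sigma^2=\mathrm{id}$ on $\mathbb{S}$.

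It remains to check that $\Gamma'$ is a chord-arc curve, and this is the main point. Because $\Gamma$ is compact and $p\notin\Gamma$, the map $M$ is smooth on a neighborhood of $\Gamma$ with $|M'(w)|=|w-p|^{-2}$ bounded above and below there; together with the elementary identity $|M(z_1)-M(z_2)| = |z_1-z_2|/(|z_1-p|\,|z_2-p|)$ this shows $M$ is bi-Lipschitz on $\Gamma$, and since a bi-Lipschitz homeomorphism multiplies both arclengths and chords by bounded factors, $M(\Gamma)$ is again chord-arc, with constant at most $K\bigl(\max_{\Gamma}|w-p|/\mathrm{dist}(p,\Gamma)\bigr)^{2}$. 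The reflection $\sigma$ is a Euclidean isometry and preserves the chord-arc constant, so $\Gamma'$ is chord-arc and $h^{-1}\in{\rm CQS}$. I expect this bi-Lipschitz verification — confirming that the inversion needed to swap the two domains does not destroy the chord-arc property — to be the only genuine obstacle. Once it is in place, the passage back to $T_c$ is routine: since ${\rm CQS}$ is invariant under left composition with $\mbox{\rm M\"ob}(\mathbb{S})$ (replacing the inner Riemann map $g$ by $g\circ m^{-1}$ leaves the curve $\Gamma$ unchanged), the normalized representative of $\tau$ already lies in ${\rm CQS}$, and applying the construction to it produces a representative of $\tau^{-1}$ in ${\rm CQS}$, whence $\tau^{-1}\in T_c$.
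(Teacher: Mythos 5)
Your proposal is correct and takes essentially the same route as the paper: the paper realizes $h^{-1}$ as the welding of $j(\Gamma)$ for the single anticonformal reflection $j(z)=\bar z^{-1}$, which fixes $\mathbb{S}$ pointwise (so none of your conjugation bookkeeping with $\sigma$ and $\iota$ is needed) and agrees, up to a harmless translation, with your composite $w \mapsto \overline{M(w)}$ when $p=0$; it then concludes by bi-Lipschitz invariance of the chord-arc property exactly as you do. The only difference is presentational: you spell out the bi-Lipschitz estimate on $\Gamma$ and the reduction to a bounded curve, both of which the paper leaves implicit.
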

\begin{proof}
If $h = g^{-1}\circ f$ is the conformal welding homeomorphism corresponding to a chord-arc curve $\Gamma$, then 
$h^{-1} = f^{-1}\circ g = (j\circ f\circ j)^{-1}\circ (j\circ g\circ j)$ is the conformal welding homeomorphism corresponding to $j(\Gamma)$, where $j(z) =z^*= \bar{z}^{-1}$ is the standard reflection of $\mathbb S$. 
By bi-Lipschitz continuity of $j$, $j(\Gamma)$ is a chord-arc curve. This proves that if $\tau=[h] \in T_c$ then $\tau^{-1} \in T_c$.
\end{proof}

\begin{remark}
{\rm
Noting that $\log(h^{-1})' = -\log h' \circ h^{-1}$, we conclude by Jones \cite{Jo} that 
$\|\log(h^{-1})'\|_{\rm BMO} \leqslant C\|\log h'\|_{\rm BMO}$ for some constant $C>0$
depending only on the strongly quasisymmetric constant for $h$.
Thus, the inverse mapping $T_c \ni \tau=[h] \mapsto \tau^{-1} \in T_c$ is continuous at the origin $o=[\rm id]$
under the BMO topology. However, this correspondence should not be continuous except at the origin.
}
\end{remark}

We now prove the claim mentioned above as follows.
\begin{theorem}\label{represented}
Each element of $T_b$ can be represented as a finite composition of elements in $T_c$.
\end{theorem}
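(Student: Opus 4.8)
The plan is to turn the statement into a soft topological--group argument, using the two facts already recorded: that a whole $\rm BMO$-ball around the origin lies in $T_c$, and that $T_c$ is closed under the inverse (Proposition \ref{inverse}). Set
$$
U=\{\tau=[h]\in T_b \mid \Vert \log h' \Vert_{\rm BMO}<c\},
$$
where $c>0$ is the constant from the converse statement recalled at the start of this section. Since $d_{\rm BMO}(h,{\rm id})=\Vert \log h'\Vert_{\rm BMO}$, the set $U$ is precisely the open $d_{\rm BMO}$-ball of radius $c$ centered at the origin $o=[{\rm id}]$; in particular $U$ is an open neighborhood of $o$ and, by the quoted fact, $U\subset T_c$. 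Because $o\in U$ and $T_c$ is inverse-closed, we also have $U^{-1}\subset T_c$. Consequently every finite $\ast$-product of elements of $U\cup U^{-1}$ is, by construction, a finite composition of elements of $T_c$, and it suffices to show that such products already fill out all of $T_b$.

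First I would form the subgroup $H=\langle U\rangle$ of $(T_b,\ast)$ generated by $U$. The structural ingredient I rely on is that for each $\tau\in T_b$ the right translation $R_\tau(\sigma)=\sigma\ast\tau^{-1}$ is a homeomorphism of $T_b$ for the BMO topology; this is exactly the assertion that the neighborhood base at $o$ is transported to every point of $\rm SQS$ by right translation. Granting this, $H$ is open: for any $\eta\in H$ the set $U\ast\eta=R_{\eta^{-1}}(U)$ is open, contains $\eta=o\ast\eta$, and lies in $H$, so $H=\bigcup_{\eta\in H}(U\ast\eta)$ is a union of open sets. An open subgroup is automatically closed, because $T_b\setminus H$ is the union of the remaining right cosets $H\ast g=R_{g^{-1}}(H)$, each of which is open. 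Thus $H$ is a nonempty open-and-closed subset of $T_b$.

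It then remains only to check that $T_b$ is connected, for then $H=T_b$ and we are done. Here I would pass to the Beltrami model $T_b=\pi(\mathcal{M}(\mathbb D))$, recalling that the BMO topology on $T_b$ is the one induced from $\mathcal{M}(\mathbb D)$ by the continuous projection $\pi$. The space $\mathcal{M}(\mathbb D)=\mathcal{L}(\mathbb D)\cap M(\mathbb D)$ is star-shaped about $0$: for $\mu\in\mathcal{M}(\mathbb D)$ and $t\in[0,1]$ one has $\Vert t\mu\Vert_\infty=t\Vert\mu\Vert_\infty<1$ and $\Vert t\mu\Vert_*=t\Vert\mu\Vert_*$, so the segment $t\mapsto t\mu$ remains in $\mathcal{M}(\mathbb D)$ and is norm-continuous. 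Hence $\mathcal{M}(\mathbb D)$ is path-connected, and its continuous image $T_b=\pi(\mathcal{M}(\mathbb D))$ is connected. Combined with the previous paragraph this yields $H=T_b$, i.e.\ every element of $T_b$ is a finite $\ast$-composition of elements of $T_c$.

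I expect the one genuinely delicate point to be the structural input of the second paragraph: that $R_\tau$ is a bona fide self-homeomorphism of $T_b$ in the BMO topology (and not merely of the ambient universal Teich\-m\"ul\-ler space $T$), since the whole open--closed dichotomy for $H$ rests on it. This is where the partial topological group structure of $\rm SQS$ has to be used in full. By contrast, the verifications that $U$ is open and inverse-closed into $T_c$, and that $\mathcal{M}(\mathbb D)$ is star-shaped, are routine.
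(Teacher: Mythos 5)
Your argument is correct, and it runs on the same fuel as the paper's proof---connectedness of $T_b$, an open neighborhood of the origin inside $T_c$, the inverse-closedness of $T_c$ (Proposition \ref{inverse}), and the fact that right translations $R_\tau$ are homeomorphisms of $(T_b, d_{\rm BMO})$---but the packaging is genuinely different. The paper does not form a subgroup: it defines $V$ to be the set of points admitting a neighborhood of finitely decomposable elements, observes $V$ is nonempty (via Zinsmeister's openness of $T_c$) and open by fiat, and proves closedness by hand, taking $\tau_n \to \tau$, writing $\tau_n \ast \tau^{-1} = \sigma \in T_c$, and decomposing each $\tau' \in R_\tau^{-1}(U)$ as $\tau' = \sigma' \ast \sigma^{-1} \ast \tau_n$. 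You instead invoke the standard lemma that the subgroup $H = \langle U \rangle$ generated by an open neighborhood of the identity is clopen, which is cleaner and isolates the group-theoretic content; in exchange you must check that every word in $U \cup U^{-1}$ lies in finite compositions of $T_c$, which is exactly where Proposition \ref{inverse} enters for you (the paper uses it inside the closedness step instead). Two further remarks. First, you supply a proof of the connectedness of $T_b$ via the star-shapedness of $\mathcal{M}(\mathbb{D})$ and continuity of $\pi$, whereas the paper simply asserts connectedness---a worthwhile addition, and your norm computation $\Vert t\mu \Vert_\ast = t\Vert \mu \Vert_\ast$ is right since $\lambda_{t\mu} = t^2 \lambda_\mu$. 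Second, the point you flag as delicate---that $R_\tau$ is a self-homeomorphism of $T_b$ in the BMO topology, not merely of $T$---is indeed the crux, but note the paper's proof leans on it just as heavily (both in ``$R_\tau(\tau_n) \in U$ for large $n$'' and in ``$R_\tau^{-1}(U)$ is a neighborhood of $\tau$''); it is justified by Jones's theorem \cite{Jo}, since $\log(h \circ g)' - \log(k \circ g)' = (\log h' - \log k')\circ g$ and composition with a strongly quasisymmetric $g$ is bounded on ${\rm BMO}(\mathbb{S})$, so your proof incurs no hypothesis beyond what the paper already uses. One cosmetic quibble: in deducing $U^{-1} \subset T_c$ you cite ``$o \in U$,'' which is irrelevant---$U \subset T_c$ plus Proposition \ref{inverse} is all that is needed; and strictly speaking $\Vert \log h' \Vert_{\rm BMO}$ depends on the choice of representative of $[h]$ modulo $\mbox{\rm M\"ob}(\mathbb{S})$, so $U$ should be read with normalized representatives, as the paper implicitly does.
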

\begin{proof}
Let $V$ denote a subset of $T_b$ consisting of all $\tau$ for which there exists an open neighborhood $W$ such that each $\tau' \in W$ 
can be represented as finite composition of elements in $T_c$.
Since $T_b$ is connected, in order to prove that $V$ coincides with $T_b$,
it suffices to show that $V$ is nonempty, open, and closed. 
$T_c$ is an open subset of $T_b$ containing the origin $o=[\rm id]$. This is essentially shown by Zinsmeister \cite{Zi85} (see also \cite{AZ}).  We see that
$o \in V$, and hence $V$ is nonempty. By the definition of $V$, $V$ is open.

Now we prove that $V$ is closed. Let $\{\tau_n\} \subset V$ be a sequence
such that $\tau_n \to \tau$ as $n \to \infty$. We will show that $\tau \in V$. 
Let $U$ be an open neighborhood of $o$ in $T_c$. 
Then, $R_{\tau}(\tau_n) \in U \subset T_c$ for all sufficiently large $n$. 
Hence, there exists an element $\sigma \in U \subset T_c$ 
such that $\tau_n \ast \tau^{-1} = \sigma$. By Proposition \ref{inverse}, we have $\tau \ast \tau_n^{-1}=\sigma^{-1} \in T_c$. 

We see that $R_{\tau}^{-1}(U)$ is a neighborhood of $\tau$. For each $\tau' \in R_{\tau}^{-1}(U)$, we have $R_{\tau}(\tau') \in U$. 
Namely, there exists an element $\sigma' \in U \subset T_c$ such that $\tau' \ast \tau^{-1} = \sigma'$. It follows that $\tau' = \sigma' \ast \tau = \sigma' \ast \sigma^{-1}\ast\tau_n$. Therefore, we have a neighborhood $R_{\tau}^{-1}(U) \doteq W$ of $\tau$ such that each $\tau' \in W$ can be represented as a finite composition of elements in $T_c$. This completes the proof.
\end{proof}

As $T_c \subsetneqq T_b$ by definition, we have the following immediate consequence from this theorem.

\begin{corollary}\label{not}
$T_c$ is not a subgroup of $(T,\ast)$.
\end{corollary}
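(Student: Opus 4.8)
The plan is to argue by contradiction, using Theorem \ref{represented} as the sole substantive input. First I would suppose, toward a contradiction, that $T_c$ is a subgroup of $(T,\ast)$. By definition a subgroup is closed under the group operation $\ast$, and hence closed under every finite composition $\tau_1 \ast \cdots \ast \tau_k$ of its own elements.

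Next I would invoke Theorem \ref{represented} directly: every element $\tau \in T_b$ admits a representation $\tau = \tau_1 \ast \cdots \ast \tau_k$ with each $\tau_i \in T_c$. Under the standing assumption that $T_c$ is closed under composition, each such $\tau$ would then belong to $T_c$, yielding the inclusion $T_b \subseteq T_c$. Combining this with the trivial inclusion $T_c \subseteq T_b$, which holds by the very definition $T_c = \mbox{\rm M\"ob}(\mathbb{S}) \backslash {\rm CQS} \subset T_b$, forces $T_c = T_b$. This contradicts the fact, recorded immediately before the corollary, that the inclusion is proper, i.e. $T_c \subsetneqq T_b$. Therefore $T_c$ cannot be a subgroup of $(T,\ast)$.

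Since the real content of the argument, namely that finite compositions of chord-arc welding maps already exhaust all of $T_b$, has been discharged in Theorem \ref{represented}, I expect no genuine obstacle in the corollary itself; it is a one-line deduction. The only conceptual point worth flagging is the localization of \emph{which} subgroup axiom fails: Proposition \ref{inverse} shows $T_c$ is closed under inverses, and $T_c$ contains the identity $o = [\mathrm{id}]$, so the failure of the subgroup property is pinned precisely on the failure of closure under $\ast$. This is exactly the feature that the contradiction isolates, and it is the qualitative conclusion the section is aiming at.
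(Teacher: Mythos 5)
Your proof is correct and follows exactly the paper's (implicit) argument: the corollary is stated as an immediate consequence of Theorem \ref{represented} together with the proper inclusion $T_c \subsetneqq T_b$, which is precisely the contradiction you derive. Your additional remark pinning the failure on closure under $\ast$ (via Proposition \ref{inverse} and $o=[\mathrm{id}] \in T_c$) is a accurate refinement consistent with the paper.
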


\section{Foliated structure of the chord-arc curve  subspace}

We have mentioned that the chord-arc curve subspace $T_c$ is an open subset of $T_b$.
There is a long-standing open question about whether $T_c$ is connected or not.
For a recent account to a related result, see Astala and Gonz\'alez \cite{AG}. In this section, we prove
a result concerning the distribution of $T_c$ in $T_b$.

In the universal Teich\-m\"ul\-ler space $T$, there is a closed subspace $T_0$ defined by $T_0=\pi(M_0(\mathbb D))$,
where $M_0(\mathbb D)$ is the space of Beltrami coefficients vanishing on the boundary. The subspace $T_0$ can be also defined to be $\mbox{\rm M\"ob}(\mathbb{S})\backslash {\rm Sym}$ by the subgroup $\rm Sym \subset QS$ consisting of symmetric homeomorphisms of $\mathbb{S}$, which are the boundary extension of asymptotically conformal homeomorphisms of $\mathbb{D}$ whose complex dilatations belong to $M_0(\mathbb{D})$.  
We denote by $B_0(\mathbb{D^{*}})$ the Banach subspace of $B(\mathbb{D}^{*})$ consisting of all elements $\varphi$ such that 
$\rho_{\mathbb D^*}^{-2}(z)|\varphi(z)| \to 0$ as $|z| \to 1^+$. By the Bers embedding $\beta: T \to B(\mathbb{D}^{*})$, 
$T_0$ is mapped into $B_0(\mathbb{D}^{*})$ and identified with a domain $\beta(T_0)=\Phi(M_0(\mathbb D))$. 

Similarly, there is a closed subspace in $T_b$ that can be given by vanishing Carleson measures on $\mathbb D$.
Here, we say that a Carleson measure $\lambda(z)dxdy$ on a simply connected domain $\Omega$ is vanishing if 
$$
\lim_{r \to 0} \frac{1}{r} \int_{|z-\zeta| < r,\ z \in \Omega} \lambda(z)dxdy=0
$$
uniformly for $\zeta \in \partial \Omega$. The set of all such vanishing Carleson measures on $\Omega$ is denoted by $\rm CM_0(\Omega)$.
Let ${\mathcal M}_0(\mathbb D)$ be the subspace of ${\mathcal M}(\mathbb D)$
consisting of all Beltrami coefficients $\mu$ such that $\lambda_\mu(z)dxdy \in \rm CM_0(\mathbb D)$. Then,
$T_v=\pi({\mathcal M}_0(\mathbb D))$ is a closed subspace of $T_b$, which is called
the VMO Teich\-m\"ul\-ler space. 
We denote $\mathcal{B}_0(\mathbb{D}^{*})$ by the Banach subspace of $\mathcal{B}(\mathbb{D}^{*})$ consisting of all elements $\varphi$ 
such that $|\varphi(z)|^2\rho_{\mathbb D^*}^{-3}(z)dxdy \in \rm CM_0(\mathbb{D}^{*})$. 
Then $\mathcal{B}_0(\mathbb{D}^{*}) \subset B_0(\mathbb{D}^{*})$ by \cite[Lemma 4.1]{SW}.
It is proved in \cite[Theorems 4.1, 5.2]{SW} that $\Phi$ maps $\mathcal{M}_0(\mathbb{D})$ into $\mathcal{B}_0(\mathbb{D}^{*})$ and the Bers embedding $\beta$ of $T_v$ is a homeomorphism onto a domain $\beta(T_v)=\Phi({\mathcal M}_0(\mathbb D))$ in $\mathcal{B}_0(\mathbb{D}^{*})$. 

The VMO Teich\-m\"ul\-ler space $T_v$ can be also defined to be $T_v=\mbox{\rm M\"ob}(\mathbb S)\backslash \rm SS$ by the characteristic topological subgroup $\rm SS$ of the partial topological group $\rm SQS$
consisting of all strongly symmetric homeomorphisms. 
Here, we say that $h \in \rm SQS$ is strongly symmetric if $\log h' \in {\rm VMO}(\mathbb S)$, where
a function $\phi \in {\rm BMO}(\mathbb S)$ belongs to ${\rm VMO}(\mathbb S)$ if 
$$
\lim_{|I| \to 0} \frac{1}{|I|} \int_I |\phi - \phi_I|\,\frac{d\theta}{2\pi}=0
$$
uniformly. In fact, ${\rm VMO}(\mathbb{S})$ is the closed subspace of ${\rm BMO}(\mathbb S)$ which is precisely the closure of the space of all continuous functions on $\mathbb{S}$ under the BMO topology. The inclusion relation $\rm SS \subset \rm Sym$ is known.

We prove that $T_c$ is distributed in $T_b$ entirely in all directions of $T_v$ in the following sense.
\begin{theorem}\label{Tc}
For each $\tau \in T_c$, we have  $R_{\tau}^{-1}(T_v) \subset T_c$. Hence, $T_c=\bigsqcup_{[\tau] \in T_v\backslash T_c}R_{\tau}^{-1}(T_v)$.
\end{theorem}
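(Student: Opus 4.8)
The plan is to isolate the single substantive inclusion $R_\tau^{-1}(T_v)\subset T_c$ and then read off the decomposition formally. Since $R_\tau(\sigma)=\sigma\ast\tau^{-1}$, we have $R_\tau^{-1}(\sigma)=\sigma\ast\tau$, so $R_\tau^{-1}(T_v)=T_v\ast\tau$; as $T_v$ is a subgroup of $(T_b,\ast)$, its right cosets $T_v\ast\tau$ partition $T_b$. Granting the inclusion, each $\tau\in T_c$ satisfies $\tau=o\ast\tau\in T_v\ast\tau\subset T_c$ because $o=[\mathrm{id}]\in T_v$, so $T_c=\bigcup_{\tau\in T_c}(T_v\ast\tau)$; distinct right cosets are disjoint, and the cosets that occur are parametrized exactly by $T_v\backslash T_c$, which gives the displayed disjoint union. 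Thus everything reduces to showing $v\ast\tau\in T_c$ whenever $v\in T_v$ and $\tau\in T_c$.

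To attack this I would pass to the curve picture. Write $\tau=\pi(\mu_\tau)$ with $\mu_\tau\in\mathcal{M}(\mathbb D)$, let $\Gamma_\tau=f_{\mu_\tau}(\mathbb S)$ be the associated chord-arc curve with inner domain $\Omega_\tau$, outer domain $\Omega_\tau^{*}$, and inner Riemann map $g_\tau:\mathbb D\to\Omega_\tau$; since $g_\tau^{-1}\circ f_{\mu_\tau}$ is a normalized self-map of $\mathbb D$ with dilatation $\mu_\tau$, the welding of $\tau$ is $h_\tau=f^{\mu_\tau}|_{\mathbb S}=(g_\tau|_{\mathbb S})^{-1}\circ(f_{\mu_\tau}|_{\mathbb S})$. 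Write $v=\pi(\mu_v)$ with $\mu_v\in\mathcal{M}_0(\mathbb D)$, so that $h_v=f^{\mu_v}|_{\mathbb S}$ is strongly symmetric. As $f^{\mu_v\ast\mu_\tau}=f^{\mu_v}\circ f^{\mu_\tau}$, the welding of $\sigma:=v\ast\tau$ is $h_v\circ h_\tau$, and my task is to exhibit a chord-arc curve with this welding, namely $\Gamma_\sigma=f_{\mu_v\ast\mu_\tau}(\mathbb S)$ with $f_{\mu_v\ast\mu_\tau}$ conformal on $\mathbb D^{*}$.

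The geometric heart is to realize $\Gamma_\sigma$ as a controlled deformation of $\Gamma_\tau$. I would consider $\Psi=f_{\mu_v\ast\mu_\tau}\circ f_{\mu_\tau}^{-1}$ on $\widehat{\mathbb C}$. On $\Omega_\tau^{*}$ it is a composition of conformal maps, hence conformal; on $\Omega_\tau$ a direct computation gives $\Psi=g_\sigma\circ f^{\mu_v}\circ g_\tau^{-1}$, where $g_\sigma$ is the inner Riemann map of $\Gamma_\sigma$, so its Beltrami coefficient has modulus $|\mu_v\circ g_\tau^{-1}|$. Because $\Gamma_\tau$ is chord-arc, the Riemann map $g_\tau$ transports the vanishing Carleson measure $\lambda_{\mu_v}=|\mu_v|^2\rho_{\mathbb D}\,dxdy\in\mathrm{CM}_0(\mathbb D)$ to a measure in $\mathrm{CM}_0(\Omega_\tau)$, by conformal invariance of the (vanishing) Carleson condition under the Riemann map of a chord-arc domain, which rests on $|g_\tau'|$ being an $A_\infty$ weight. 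Hence $\Gamma_\sigma=\Psi(\Gamma_\tau)$ with $\Psi$ conformal on $\Omega_\tau^{*}$ and quasiconformal on $\Omega_\tau$ with vanishing-Carleson dilatation. Two sanity checks confirm the setup: for $v=o$ one gets $\Psi=\mathrm{id}$ and $\Gamma_\sigma=\Gamma_\tau$, while for $\tau=o$ one has $\Omega_\tau=\mathbb D$ and $\Gamma_\sigma=f_{\mu_v}(\mathbb S)$, the asymptotically smooth curve of $v$.

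The hard part, and the step I expect to require the most care, is the final implication: a homeomorphism $\Psi$ of $\widehat{\mathbb C}$ that is conformal on $\Omega_\tau^{*}$ and quasiconformal on $\Omega_\tau$ with vanishing-Carleson dilatation carries the chord-arc curve $\Gamma_\tau$ to a chord-arc curve. The vanishing (rather than merely finite) Carleson norm is essential here, since a general element of $T_b$ would leave $T_c$; it encodes that $\Psi$ is an asymptotically conformal deformation. I expect to prove this by showing that $\Psi$ is bi-Lipschitz in a neighbourhood of $\Gamma_\tau$, so that $\Gamma_\sigma$ is a bi-Lipschitz image of a chord-arc curve and hence itself chord-arc. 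The required bi-Lipschitz estimate for a quasiconformal map whose dilatation generates a Carleson measure adapted to a chord-arc curve is the genuine analytic input, and I would establish it through the $A_\infty$ and Carleson-measure estimates characteristic of chord-arc geometry together with the VMO machinery underlying $T_v$ developed earlier.
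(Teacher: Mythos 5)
Your reduction to showing $v\ast\tau\in T_c$ for $v\in T_v$ and $\tau\in T_c$, and your construction of $\Psi=f_{\mu_v\ast\mu_\tau}\circ f_{\mu_\tau}^{-1}$ --- conformal on $\Omega_\tau^{*}$, quasiconformal on $\Omega_\tau$ with dilatation of modulus $|\mu_v\circ g_\tau^{-1}|$ inducing a vanishing Carleson measure on $\Omega_\tau$ --- coincide exactly with the paper's setup (the transport of $\mathrm{CM}_0(\mathbb D)$ to $\mathrm{CM}_0(\Omega_\tau)$ is \cite[Theorem 3.2]{WZ}). The genuine gap is your final step: the claim that $\Psi$ is bi-Lipschitz in a neighbourhood of $\Gamma_\tau$ is false in general, so no amount of care with $A_\infty$ estimates will establish it. Test it in your own sanity-check case $\tau=o$: there $\Psi|_{\mathbb D^{*}}$ is the exterior Riemann map $f$ of the asymptotically smooth curve $\Gamma_\sigma$, and for such curves one only has $\log f'\in \mathrm{VMOA}$. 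Since $\mathrm{VMOA}$ contains unbounded functions, $|f'|=|\Psi'|$ need be neither bounded above nor bounded below near $\mathbb S$, so $\Psi$ fails to be bi-Lipschitz on any neighbourhood of $\Gamma_\tau$. The chord-arc conclusion is still true, but not because $\Gamma_\sigma$ is a bi-Lipschitz image of $\Gamma_\tau$ under $\Psi$.

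The paper circumvents this with a decomposition your plan lacks: write $\widehat f=\widehat{f_0}\circ\widehat{f_1}$, where the dilatation of $\widehat{f_1}$ equals $\widehat\mu$ only off a compact set $\Omega_0\subset\Omega$, with $\Omega_0$ chosen (using the \emph{vanishing} of the Carleson measure) so that $|\widehat\mu_1|^2\rho_{\Omega}$ has small Carleson norm. The estimates of \cite[Lemma 4.1, Theorem 3.1]{WZ} then give $|\mathcal S(\widehat{f_1}\circ f)-\mathcal S(f)|^2\rho_{\mathbb D^{*}}^{-3}\in \mathrm{CM}_0(\mathbb D^{*})$ with small norm, and the openness of $T_c$ in $T_b$ (Zinsmeister, Astala--Zinsmeister) forces $\partial\widehat{f_1}(\Omega)$ to be chord-arc: smallness plus openness replaces the global bi-Lipschitz control you were hoping for. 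The bi-Lipschitz argument is reserved for the remaining factor $\widehat{f_0}$, whose dilatation has compact support $\widehat{f_1}(\Omega_0)$; being conformal on $\widehat{\mathbb C}-\widehat{f_1}(\Omega_0)$, it genuinely is bi-Lipschitz on a neighbourhood of the compact curve $\partial\widehat{f_1}(\Omega)$, and only for this factor is your ``bi-Lipschitz image of a chord-arc curve'' argument valid. You would need to add this two-step splitting, or some substitute for it, to close the proof.
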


\begin{proof}
For each $\sigma \in T_v$, we will show that $\widehat{\sigma} \doteq \sigma\ast\tau$ belongs to $T_c$. Let $g^{-1}\circ f$ and $g_1^{-1}\circ f_1$ be the conformal welding homeomorphisms such that $[g^{-1}\circ f]=\tau$ and $[g_1^{-1}\circ f_1]=\widehat{\sigma}$.
We set $\Omega = g(\mathbb{D})$, $\Omega^{*} = f(\mathbb{D}^{*})$, and $\Gamma=\partial \Omega=\partial \Omega^*$ which is a chord-arc curve.
Similarly, we set $\Omega_1 = g_1(\mathbb{D})$ and $\Omega_1^{*} = f_1(\mathbb{D}^{*})$ 
with a quasicircle $\Gamma_1=\partial \Omega_1=\partial \Omega_1^*$. 
Let $f^{\nu}$ and $f^{\mu}$ be normalized quasiconformal self-homeomorphisms of $\mathbb D$
corresponding to 
$\tau$ and $\sigma$, respectively. Noting that $\sigma \in T_v$, we can assume that the complex dilatation $\mu$ of $f^{\mu}$ induces a vanishing Carleson measure $\lambda_{\mu} \in \rm CM_0(\mathbb{D})$. 
Then, $g\circ f^{\nu}$ and $g_1\circ f^{\mu}\circ f^{\nu}$ are quasiconformal extensions of $f$ and $f_1$ to $\mathbb{D}$, respectively. 

We define 
$$
\widehat{f} = \begin{cases}
f_1 \circ f^{-1} &  {\rm on}\;\; \Omega^{*}\\
(g_1 \circ f^{\mu}\circ f^{\nu})\circ (g\circ f^{\nu})^{-1} = g_1\circ f^{\mu}\circ g^{-1} & {\rm on}\;\; \Omega.
\end{cases}
$$
Then, $\widehat{f}: \widehat{\mathbb{C}} \to \widehat{\mathbb{C}}$ is conformal on $\Omega^{*}$ and asymptotically conformal on $\Omega$ whose complex dilatation $\widehat{\mu}$ satisfies $|\widehat{\mu}|^2\rho_{\Omega} = \lambda_{\mu}\circ g^{-1}|(g^{-1})^{'}|$ for the Poincar\'e density
$\rho_\Omega$ on $\Omega$. As $\lambda_\mu \in \rm CM_0(\mathbb D)$, we have that $|\widehat{\mu}|^2\rho_{\Omega} \in \rm CM_0(\Omega)$ by \cite[Theorem 3.2]{WZ}.

We decompose $\widehat{f}$ into $\widehat{f_0}\circ \widehat{f_1}$ as follows. The quasiconformal homeomorphism $\widehat{f_1}: \widehat{\mathbb{C}} \to \widehat{\mathbb{C}}$ is chosen so that its complex dilatation $\widehat \mu_1$ coincides with $\widehat{\mu}$ on $\Omega - \Omega_0$ 
for some compact subset $\Omega_0$ of $\Omega$ and zero elsewhere. Then $\widehat{f_0}$ is defined to be $\widehat{f}\circ\widehat f_1^{-1}$. We have the following commutative diagram:\\
\begin{center}
\begin{tikzpicture}
\matrix(m)[matrix of math nodes, row sep=5em, column sep=5em]
{ |[name=ka]| \mathbb{D} & |[name=kb]| \mathbb{D} & |[name=kc]| \mathbb{D}\\
|[name=kd]| \Omega_1 & |[name=ke]|  & |[name=kf]| \Omega & |[name=kg]| \widehat{f_1}(\Omega) \\};

\draw[->, thick] 
(kb) edge node[below] {$f^{\mu}\circ f^{\nu}$} (ka)
(kb) edge node[below] {$f^{\nu}$} (kc)
(kc) edge node[auto] {$g$} (kf)
(kb) edge node[below, sloped, thick] {$g_1\circ f^{\mu}\circ f^{\nu}$} (kd)
(kb) edge node[below, sloped, thick] {$g\circ f^{\nu}$} (kf)
(ka) edge node[left] {$g_1$} (kd)
(kf) edge node[above] {$\widehat{f}$} (kd)
(kf) edge node[above] {$\widehat{f_1}$} (kg)
                          ;
\draw[->, thick][bend right] (kc) edge node[auto] {$f^{\mu}$} (ka);
\draw[->, thick][bend left] (kg) edge node[above] {$\widehat{f_0}$} (kd);                          
\end{tikzpicture}\\
\end{center}

Here, the compact subset $\Omega_0 \subset \Omega$ is chosen so that $|\widehat \mu_1|^2\rho_{\Omega} \in \rm CM_0(\Omega)$ has
a sufficiently small norm of the Carleson measure. It follows from \cite[Lemma 4.1]{WZ} that $|\mathcal{S}(\widehat{f_1})|^2\rho_{\Omega^{*}}^{-3} \in \rm CM_0(\Omega^{*})$
with a small norm. By \cite[Theorem 3.1]{WZ}, we have that
\begin{equation*}
|\mathcal{S}(\widehat{f_1}\circ f) - \mathcal{S}(f)|^2 \rho_{\mathbb D^{*}}^{-3} = (|\mathcal{S}(\widehat{f_1})|^2 \rho_{\Omega^{*}}^{-3})\circ f |f'| \in \rm CM_0(\mathbb{D}^{*}),    
\end{equation*}
and moreover we see that it can be of a small norm according to that of $|\mathcal{S}(\widehat{f_1})|^2\rho_{\Omega^{*}}^{-3}$. 
Combined with the facts that $\Gamma$ is a chord-arc curve and that the subspace $T_c$ is open, it implies that $\partial \widehat{f_1}(\Omega)$ is also a chord-arc curve. Since the complex dilatation $\widehat \mu_0$ of $\widehat{f_0}$ has the compact support $\widehat{f_1}(\Omega_0) \subset \widehat{f_1}(\Omega)$, we conclude that $\Gamma_1$ is the image of $\partial \widehat{f_1}(\Omega)$ under the conformal mapping $\widehat{f_0}$ defined on 
$\widehat{\mathbb{C}}- \widehat{f_1}(\Omega_0)$,
which is bi-Lipschitz in a neighborhood of $\partial \widehat{f_1}(\Omega)$. Thus, we see that $\Gamma_1$ is again a chord-arc curve, which implies that $\widehat{\sigma} \in T_c$.
\end{proof}

We consider the projection
$$
p:T_b=\mbox{\rm M\"ob}(\mathbb S) \backslash {\rm SQS} \to T_v \backslash T_b={\rm SS} \backslash {\rm SQS}.
$$
The quotient space $T_v \backslash T_b$ is endowed with the quotient topology.
We apply this quotient map to the subspace $T_c$. Then,
Theorem \ref{Tc} is equivalent to saying that $T_c=p^{-1}(p(T_c))$.
Concerning the topology of $T_c$ and $p(T_c)$, we immediately see the following.
Noting the fact that $T_v$ is contractible shown in \cite{TWS},
the connectedness problem on $T_c$ can be also passed to this quotient.
\begin{corollary}
The quotient space $p(T_c)$ is a proper open subset of $p(T_b)$. If $p(T_c)$ is connected, then so is $T_c$.
\end{corollary}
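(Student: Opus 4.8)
The plan is to derive both assertions from the structural fact recorded just above, namely that $T_c$ is saturated for $p$, i.e.\ $T_c = p^{-1}(p(T_c))$ (the content of Theorem \ref{Tc}), together with two facts already in hand: $T_c$ is open in $T_b$, and each fiber of $p$ contained in $T_c$ is a coset $R_{\tau}^{-1}(T_v)$ that is connected because $T_v$ is contractible \cite{TWS}.

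To establish that $p(T_c)$ is open and proper, I would argue as follows. Since $T_v \backslash T_b$ carries the quotient topology, $p$ is a quotient map, so a subset of $p(T_b) = T_v \backslash T_b$ is open exactly when its $p$-preimage is open in $T_b$. The preimage $p^{-1}(p(T_c))$ equals $T_c$ by Theorem \ref{Tc}, and $T_c$ is open, whence $p(T_c)$ is open. For properness, choose $\tau_0 \in T_b \setminus T_c$, which exists since $T_c \subsetneq T_b$; were $p(\tau_0) \in p(T_c)$, we would get $\tau_0 \in p^{-1}(p(T_c)) = T_c$, a contradiction, so $p(\tau_0) \in p(T_b) \setminus p(T_c)$ and the inclusion is strict.

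For the implication about connectedness I would pass to the restriction $p|_{T_c} \colon T_c \to p(T_c)$. As $T_c$ is open and saturated, this restriction is again a quotient map onto $p(T_c)$ (with the subspace topology). Its fibers are precisely the cosets $R_{\tau}^{-1}(T_v)$, $[\tau] \in T_v \backslash T_c$, and each is connected, being the continuous image of the connected set $T_v$ under the right translation $R_{\tau}^{-1}$, a homeomorphism of $T_b$. I would then invoke the principle that a quotient map with connected codomain and connected fibers has connected domain: if $T_c = U \sqcup V$ were a separation into nonempty open sets, every fiber, being connected, would lie wholly in $U$ or wholly in $V$, so $U$ and $V$ would both be saturated; by the quotient property their images $p(U)$ and $p(V)$ would then be disjoint, nonempty, open sets covering $p(T_c)$, contradicting its connectedness. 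Hence $T_c$ is connected whenever $p(T_c)$ is.

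The openness, properness, and saturation bookkeeping are immediate once Theorem \ref{Tc} is available. The one step deserving care is the topological principle on quotient maps, where the essential input is connectedness of the fibers $R_{\tau}^{-1}(T_v)$ --- and thus the contractibility of $T_v$ --- guaranteeing that the two pieces of any separation of $T_c$ are saturated; granting this, the quotient topology forces the separation down to $p(T_c)$ and completes the argument.
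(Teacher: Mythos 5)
Your proof is correct and follows exactly the route the paper intends (the paper states the corollary as immediate, with the hint that the key input is the contractibility of $T_v$ from \cite{TWS}): saturation $T_c = p^{-1}(p(T_c))$ from Theorem \ref{Tc} gives openness and properness of $p(T_c)$, and the connectedness of the fibers $R_{\tau}^{-1}(T_v)$ lets any separation of $T_c$ descend through the quotient map to a separation of $p(T_c)$. No gaps; the one step you flag (restriction of a quotient map to an open saturated set is a quotient map) is standard, and could alternatively be bypassed by noting that $p$ is an open map, as shown in the proof of Corollary \ref{quotient} via $p^{-1}(p(V)) = \bigcup_{\tau \in T_v} R_{\tau}(V)$.
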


The quotient Bers embedding from $T_v\backslash T_c=p(T_c)$ into $\mathcal{B}_0(\mathbb{D}^{*})\backslash \mathcal{B}(\mathbb{D}^{*})$ is 
considered in \cite[Theorem 2.2]{WZ} to be 
well-defined and injective. We also generalize this theorem to the entire space $T_v \backslash T_b=p(T_b)$ in the next section.
Combining the claim for $T_c$ with Theorem \ref{Tc}, we have the following result naturally.

\begin{corollary}\label{affine}
$\beta\circ R_{\tau}^{-1}(T_v) = \beta (T_c) \cap \{\mathcal{B}_0(\mathbb{D}^{*}) + \beta (\tau)\}$ for  every $\tau \in T_c$.
\end{corollary}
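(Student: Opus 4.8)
The plan is to deduce this equality directly from Theorem \ref{Tc} together with the well-definedness and injectivity of the quotient Bers embedding $T_v \backslash T_c \to \mathcal{B}_0(\mathbb{D}^{*}) \backslash \mathcal{B}(\mathbb{D}^{*})$ from \cite[Theorem 2.2]{WZ}. The convenient reformulation of the latter is the following characterization, which I would isolate first: for any $\sigma_1, \sigma_2 \in T_c$, one has $\beta(\sigma_1) - \beta(\sigma_2) \in \mathcal{B}_0(\mathbb{D}^{*})$ if and only if $\sigma_1 \ast \sigma_2^{-1} \in T_v$. The ``if'' implication is exactly the well-definedness of the quotient embedding, and the ``only if'' implication is its injectivity. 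I would also record that, since $R_\tau(\sigma) = \sigma \ast \tau^{-1}$, the fiber in question is the left coset $R_{\tau}^{-1}(T_v) = \{\sigma \ast \tau \mid \sigma \in T_v\} = T_v \ast \tau$.

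For the inclusion $\subseteq$, I would take an arbitrary element $\sigma \ast \tau$ with $\sigma \in T_v$. By Theorem \ref{Tc} we have $\sigma \ast \tau \in R_{\tau}^{-1}(T_v) \subset T_c$, so $\beta(\sigma \ast \tau) \in \beta(T_c)$. Moreover $(\sigma \ast \tau) \ast \tau^{-1} = \sigma \in T_v$, and both $\sigma \ast \tau$ and $\tau$ lie in $T_c$, so the characterization yields $\beta(\sigma \ast \tau) - \beta(\tau) \in \mathcal{B}_0(\mathbb{D}^{*})$, i.e. $\beta(\sigma \ast \tau) \in \mathcal{B}_0(\mathbb{D}^{*}) + \beta(\tau)$. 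This gives $\beta \circ R_{\tau}^{-1}(T_v) \subseteq \beta(T_c) \cap \{\mathcal{B}_0(\mathbb{D}^{*}) + \beta(\tau)\}$.

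For the reverse inclusion $\supseteq$, I would start with a point $\varphi$ in the right-hand side and write $\varphi = \beta(\sigma')$ for some $\sigma' \in T_c$. The membership $\varphi \in \mathcal{B}_0(\mathbb{D}^{*}) + \beta(\tau)$ means $\beta(\sigma') - \beta(\tau) \in \mathcal{B}_0(\mathbb{D}^{*})$, so the injectivity half of the characterization (applied with $\sigma', \tau \in T_c$) gives $\sigma' \ast \tau^{-1} \in T_v$. Setting $\sigma := \sigma' \ast \tau^{-1} \in T_v$, I obtain $\sigma' = \sigma \ast \tau = R_{\tau}^{-1}(\sigma) \in R_{\tau}^{-1}(T_v)$, and hence $\varphi = \beta(\sigma') \in \beta \circ R_{\tau}^{-1}(T_v)$. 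Combining the two inclusions proves the equality.

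The argument is essentially formal once the characterization is in hand, so I do not expect a genuine obstacle. The one point that must be handled with care is that \emph{both} elements compared across the relation ``$\,\in \mathcal{B}_0(\mathbb{D}^{*})\,$'' must actually lie in $T_c$, since \cite[Theorem 2.2]{WZ} is stated for the quotient over $T_c$ and not over all of $T_b$. In the $\subseteq$ direction this is precisely where Theorem \ref{Tc} is needed, to guarantee $\sigma \ast \tau \in T_c$; in the $\supseteq$ direction it holds by the choice of $\sigma' \in T_c$ and the standing hypothesis $\tau \in T_c$.
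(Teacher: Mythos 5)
Your proof is correct and takes essentially the same route as the paper, which derives the corollary precisely by combining Theorem \ref{Tc} with the well-definedness and injectivity of the quotient Bers embedding $T_v\backslash T_c \to \mathcal{B}_0(\mathbb{D}^{*})\backslash \mathcal{B}(\mathbb{D}^{*})$ from \cite[Theorem 2.2]{WZ}, stating the result as a natural consequence without further detail. Your write-up simply makes explicit the coset bookkeeping $R_{\tau}^{-1}(T_v)=T_v\ast\tau$ and the (correctly flagged) point that both elements compared must lie in $T_c$ for the cited theorem to apply, with Theorem \ref{Tc} supplying exactly that in the $\subseteq$ direction.
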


By this result, we have the decomposition of the Bers embedding as 
\begin{equation*}
\beta(T_c) = \bigsqcup_{[\tau] \in T_v\backslash T_c}\beta\circ R_{\tau}^{-1}(T_v) = \bigsqcup_{[\psi] \in \mathcal{B}_0(\mathbb{D}^{*})\backslash \mathcal{B}(\mathbb{D}^{*})}\beta (T_c) \cap (\mathcal{B}_0(\mathbb{D}^{*}) + \psi).
\end{equation*}
Each component $\beta (T_c) \cap (\mathcal{B}_0(\mathbb{D}^{*}) + \psi)$ is biholomorphically equivalent to $T_v \cong \beta(T_v)$. We call this decomposition the affine foliated structure of $T_c$ induced by $T_v$.

\section{The quotient Bers embedding of the BMO Teich\-m\"ul\-ler space}

In this section, we prove the affine foliated structure of the BMO-Teich\-m\"ul\-ler space $T_b$ and the injectivity of the quotient Bers embedding induced by the VMO-Teich\-m\"ul\-ler space $T_v$. From this result, we provide the quotient BMO Teich\-m\"ul\-ler space $T_v \backslash T_b=p(T_b)$
with a complex structure modeled on the quotient Banach space $\mathcal{B}_0(\mathbb{D}^{*})\backslash \mathcal{B}(\mathbb{D}^{*})$.

\begin{theorem}\label{foliated}
$\beta\circ R_{\tau}^{-1}(T_v) = \beta (T_b) \cap \{\mathcal{B}_0(\mathbb{D}^{*}) + \beta (\tau)\}$ for every $\tau \in T_b$.
\end{theorem}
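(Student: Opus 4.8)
The plan is to convert the claimed set identity into a single membership equivalence and then to prove that equivalence by transporting vanishing Carleson measures through the two conformal weldings attached to $\tau$, exactly as in the proof of Theorem \ref{Tc}, but now requiring $\Gamma$ only to be a Bishop--Jones quasicircle rather than a chord-arc curve. First I would reduce. Since $\beta$ is injective on $T_b$, since $T_b$ is a group under $\ast$ (so $\sigma\ast\tau\in T_b$ whenever $\sigma\in T_v$, $\tau\in T_b$), and since $R_\tau^{-1}(T_v)=\{\sigma\ast\tau:\sigma\in T_v\}$, the asserted equality is equivalent to the statement that for every $\sigma\in T_b$
\[
\sigma\in T_v \iff \beta(\sigma\ast\tau)-\beta(\tau)\in\mathcal{B}_0(\mathbb{D}^*).
\]
Here the inclusion $\beta\big(R_\tau^{-1}(T_v)\big)\subseteq\beta(T_b)\cap\{\mathcal{B}_0(\mathbb{D}^*)+\beta(\tau)\}$ is the implication $\Rightarrow$, while the reverse inclusion, after writing a point of the right-hand side as $\beta(\rho)$ with $\rho\in T_b$ and setting $\sigma=\rho\ast\tau^{-1}\in T_b$, is the implication $\Leftarrow$.

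Next I would fix the geometric data as in Theorem \ref{Tc}: write $\tau=[g^{-1}\circ f]$ with $\Gamma=f(\mathbb S)=\partial\Omega=\partial\Omega^*$, $\Omega^*=f(\mathbb D^*)$, $\Omega=g(\mathbb D)$; take a representative $\mu\in\mathcal{M}(\mathbb D)$ of $\sigma$ with normalized self-map $f^\mu$; and form the homeomorphism $\widehat f$ that is conformal on $\Omega^*$ (there equal to $f_1\circ f^{-1}$) and equal to $g_1\circ f^\mu\circ g^{-1}$ on $\Omega$, where $[g_1^{-1}\circ f_1]=\sigma\ast\tau$. Then $\widehat f\circ f|_{\mathbb D^*}=f_1|_{\mathbb D^*}$, so that $\beta(\sigma\ast\tau)-\beta(\tau)=\mathcal{S}(\widehat f\circ f|_{\mathbb D^*})-\mathcal{S}(f|_{\mathbb D^*})$, and the dilatation $\widehat\mu$ of $\widehat f$ on $\Omega$ satisfies $|\widehat\mu|^2\rho_\Omega=\lambda_\mu\circ g^{-1}\,|(g^{-1})'|$, just as in Theorem \ref{Tc}.

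The core of the argument is then the chain of equivalences
\[
\mu\in\mathcal{M}_0(\mathbb D)\iff |\widehat\mu|^2\rho_\Omega\in\rm CM_0(\Omega)\iff |\mathcal{S}(\widehat f)|^2\rho_{\Omega^*}^{-3}\in\rm CM_0(\Omega^*)\iff \beta(\sigma\ast\tau)-\beta(\tau)\in\mathcal{B}_0(\mathbb D^*),
\]
whose three steps are, respectively, the change of variables \cite[Theorem 3.2]{WZ} for $g\colon\mathbb D\to\Omega$, the dilatation--Schwarzian comparison \cite[Lemma 4.1]{WZ}, and the change of variables \cite[Theorem 3.1]{WZ} for $f\colon\mathbb D^*\to\Omega^*$ (this last giving the identity $|\mathcal{S}(\widehat f\circ f)-\mathcal{S}(f)|^2\rho_{\mathbb D^*}^{-3}=(|\mathcal{S}(\widehat f)|^2\rho_{\Omega^*}^{-3})\circ f\,|f'|$). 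Reading the chain in the direction $\mu\in\mathcal{M}_0(\mathbb D)\Rightarrow\cdots$ proves the implication $\Rightarrow$, hence the forward inclusion; here, unlike in Theorem \ref{Tc}, no small-norm decomposition $\widehat f=\widehat f_0\circ\widehat f_1$ and no appeal to the openness of $T_c$ are needed, since we only have to land in $T_b$, which is automatic. Reading the chain backwards, starting from $\beta(\sigma\ast\tau)-\beta(\tau)\in\mathcal{B}_0(\mathbb D^*)$, yields $\mu\in\mathcal{M}_0(\mathbb D)$ and hence $\sigma=\pi(\mu)\in T_v$, which is the implication $\Leftarrow$.

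The main obstacle is precisely the backward reading of the middle step: one must know that \cite[Lemma 4.1]{WZ} holds as an equivalence, i.e.\ that a vanishing Carleson condition on the Schwarzian of $\widehat f$ on the conformal side $\Omega^*$ forces the vanishing Carleson condition on the dilatation $\widehat\mu$ on $\Omega$, even though a Schwarzian does not by itself determine a dilatation. I expect this to follow from the equivalence, built into the BMO/VMO theory, between the vanishing-Carleson conditions on $\lambda_\mu$ and on $\mathcal{S}(f_\mu)$, equivalently from the fact that the split submersion $\Phi$ of \cite{SW} admits a section carrying $\mathcal{B}_0(\mathbb D^*)$ into $\mathcal{M}_0(\mathbb D)$. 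A secondary point to verify is that the two changes of variables \cite[Theorems 3.1, 3.2]{WZ} transport vanishing Carleson measures in \emph{both} directions when $\Gamma$ is only a BMO quasicircle; this rests on the $A_\infty$ boundary behaviour of the welding maps $f$ and $g$ of such $\Gamma$, which is exactly what the membership $\tau\in T_b$ provides.
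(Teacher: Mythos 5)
Your reduction to the membership equivalence is fine, and your forward reading of the chain is in the right spirit, but the backward reading of the middle step is a genuine gap, and your own diagnosis of it underestimates the problem. The Schwarzian $\mathcal{S}(\widehat f)|_{\Omega^*}$ depends only on the Teich\-m\"ul\-ler classes of $\sigma\ast\tau$ and $\tau$, whereas the dilatation $\widehat\mu$ on $\Omega$ depends on the chosen representative $\mu$ of $\sigma$; since a Teich\-m\"ul\-ler-trivial coefficient in $\mathcal{M}(\mathbb D)$ need not lie in $\mathcal{M}_0(\mathbb D)$, the equivalence $|\widehat\mu|^2\rho_\Omega\in \mathrm{CM}_0(\Omega)\iff|\mathcal{S}(\widehat f)|^2\rho_{\Omega^*}^{-3}\in \mathrm{CM}_0(\Omega^*)$ is false for an arbitrary representative. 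What the reverse inclusion actually requires is to \emph{construct} some representative in $\mathcal{M}_0(\mathbb{D})$ from the hypothesis $\varphi=\beta(\sigma\ast\tau)-\beta(\tau)\in\mathcal{B}_0(\mathbb{D}^*)$, and your proposed remedy --- a section of $\Phi$ carrying $\mathcal{B}_0(\mathbb D^*)$ into $\mathcal{M}_0(\mathbb D)$ --- only exists locally: the Ahlfors--Weill section lives on a ball around the basepoint, while $\varphi$ has no smallness whatsoever. This is precisely why the paper invokes the nontrivial decomposition of \cite[Proposition 2.2]{Ma07}: because $\mathcal{B}_0(\mathbb D^*)\subset B_0(\mathbb D^*)$, the quantity $\rho_{\mathbb D^*}^{-2}|\varphi|$ vanishes at the boundary, which makes possible a factorization $\widehat f=\widehat f_0\circ\widehat f_1$ in which $\widehat\mu_1$ obeys a pointwise Ahlfors--Weill-type bound against $\varphi_1$ (whence $\mu_1\in\mathcal{M}_0(\mathbb D)$, via the bi-Lipschitz extension $f^\nu$ and \cite[Proposition 3.5]{TWS}), $\mu_0$ has compact support, and $\varphi-\varphi_1=\Phi(\mu_0\ast\mu_1\ast\nu)-\Phi(\mu_1\ast\nu)$ is handled by the forward direction. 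Without this global decomposition your chain cannot be read backwards.

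Your ``secondary point'' is also more than a verification: for general $\tau\in T_b$ the curve $\Gamma$ is only a Bishop--Jones quasicircle and need not be rectifiable, so the $A_\infty$ boundary behaviour of $g'$ that underlies the push-forward \cite[Theorem 3.2]{WZ} through the interior Riemann map is simply unavailable; moreover \cite[Lemma 4.1]{WZ} is applied in the proof of Theorem \ref{Tc} only \emph{after} reducing to small Carleson norm by the decomposition $\widehat f=\widehat f_0\circ\widehat f_1$ --- so even your forward implication, where you claim no decomposition is needed, is not justified at arbitrary norm. The paper's forward proof deliberately avoids the welding maps: it fixes $\nu$ to be the dilatation of a Poincar\'e-bi-Lipschitz extension (e.g.\ Douady--Earle), first treats $\mu$ of compact support using the Nehari-type bound $|\mathcal{S}(\widehat f)|\rho_{\Omega_0^*}^{-2}\leqslant 12$ and monotonicity of Poincar\'e densities to get a \emph{bounded} density on $\Omega^*$ (hence trivially in $\mathrm{CM}_0(\Omega^*)$), pulls back only through \cite[Theorem 3.1]{WZ}, and then reaches general $\mu\in\mathcal{M}_0(\mathbb D)$ by truncation $\mu_n$, the estimate $\|\mu-\mu_n\|_*\to 0$, continuity of $\Phi\circ r_\nu^{-1}$, and closedness of $\mathcal{B}_0(\mathbb D^*)$. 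You should restructure both inclusions along these lines rather than through the two-sided transport chain.
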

\begin{proof}
For every $\tau \in T_b= \mbox{\rm M\"ob}(\mathbb{S}) \backslash \rm SQS$, let $f^{\nu}: \mathbb{D}\to \mathbb{D}$ be a normalized
quasiconformal extension of $\tau$ with complex dilatation $\nu \in \mathcal{M}(\mathbb{D})$ (i.e. $\pi(\nu)=\tau$)
that is bi-Lipschitz under the Poincar\'e metric on $\mathbb D$ (for instance the Douady-Earle extension of $\tau$; see \cite{CZ}), and let $\psi = \Phi(\nu) \in \mathcal{B}(\mathbb{D}^{*})$.

For one inclusion $\subset$, we divide the arguments into two steps. We first deal with the special case that $\mu \in \mathcal{M}_0(\mathbb{D})$ has a compact support. Then, we extend this to the general case by means of an approximation process.

We take a Beltrami coefficient $\mu$ on $\mathbb D$ with compact support. 
Clearly, $\mu \in \mathcal{M}_0(\mathbb{D})$. We will show that
$\Phi(\mu * \nu) - \Phi(\nu) \in \mathcal{B}_0(\mathbb{D}^{*})$. Then, the inclusion $\subset$ follows from
$$
\Phi(\mu * \nu) - \Phi(\nu)=\beta \circ \pi(\mu * \nu)-\beta \circ \pi(\nu)=\beta \circ R^{-1}_\tau(\pi(\mu))-\beta (\tau).
$$
Let $f_{\nu}:  \widehat{\mathbb{C}} \to \widehat{\mathbb{C}}$ be a quasiconformal homeomorphism with complex dilatation $\nu \in \mathcal{M}(\mathbb{D})$ that is conformal on $\mathbb{D}^{*}$ with $\mathcal{S}_{f_{\nu}|_{\mathbb{D}^{*}}} = \psi$. 
Let $f_{\mu * \nu}: \widehat{\mathbb{C}} \to \widehat{\mathbb{C}}$ be a quasiconformal homeomorphism with complex dilatation $\mu * \nu$ on $\mathbb{D}$ that is conformal on $\mathbb{D}^{*}$. Set $\widehat{f} = f_{\mu * \nu} \circ f_{\nu}^{-1}$. Then, $\widehat{f}$ is a quasiconformal homeomorphism with complex dilatation $\widehat{\mu}$ on $\Omega = f_{\nu}(\mathbb{D})$ with a compact support contained in a Jordan domain $\Omega_0$ with $\overline{\Omega_0} \subset \Omega$,
and is conformal on $\Omega^{*} = f_{\nu}(\mathbb{D}^{*})$ with 
\begin{equation}\label{S}
 |\mathcal{S}({\widehat{f}})|^2 \rho_{\Omega^{*}}^{-3}= \Big(|\mathcal{S}(f_{\mu * \nu}) - \mathcal{S}(f_{\nu})|^2\rho_{\mathbb D^{*}}^{-3}\Big)\circ f_{\nu}^{-1}|(f_{\nu}^{-1})'|. \tag{$\ast$}
\end{equation}
We note that $\widehat{f}$ is conformal on $\Omega_{0}^{*} = \widehat{\mathbb{C}} - \overline{\Omega_0}$. 

It is known that $|\mathcal{S}(\widehat{f})(z)|\rho^{-2}_{\Omega_{0}^{*}}(z) \leqslant 12$ for $z \in \Omega_{0}^{*}$ (see \cite[p.67]{Le}).
Combined with the monotone property $\rho_{\Omega_0^*}(z) \leqslant \rho_{\Omega^*}(z)$ 
of Poincar\'e densities,
this inequality implies that there exists a constant $C$ such that 
\begin{equation*}
|\mathcal{S}(\widehat{f})(z)|^2 \rho_{\Omega^{*}}^{-3}(z) \leqslant 144 \rho^{4}_{\Omega_{0}^{*}}(z)\rho^{-3}_{\Omega^{*}}(z) \leqslant 144 \rho_{\Omega_{0}^{*}}(z) \leqslant C
\end{equation*}
for $z \in \Omega^{*}$. From which we deduce that $|\mathcal{S}(\widehat{f})|^2 \rho_{\Omega^{*}}^{-3} \in \rm  CM_0(\Omega^{*})$. By (\ref{S}) and well-definedness of the pull-back operator from $\rm CM_0(\Omega^{*})$ into $\rm CM_0(\mathbb{D}^{*})$ (see \cite[Theorem 3.1]{WZ}), we have that
$|\mathcal{S}(f_{\mu * \nu}) - \mathcal{S}(f_{\nu})|^2\rho_{\mathbb D^*}^{-3} \in \rm CM_0(\mathbb{D}^{*})$, which yields that $\Phi(\mu * \nu) - \Phi(\nu) \in \mathcal{B}_0(\mathbb{D}^{*})$.

For any $\sigma \in T_v = \mbox{\rm M\"ob}(\mathbb{S}) \backslash \rm SS$, the complex dilatation of the Douady-Earle extension of $\sigma$ is denoted by $\mu$. Then, $\mu \in \mathcal{M}_0(\mathbb{D})$ by \cite[Theorem 3.7]{TWS} (see also \cite{QW}). We take an increasing sequence of positive numbers $r_n < 1$ $(n = 1, 2, \ldots)$ tending to 1. Let $\Delta_n = \mathbb{D}(0, r_n)$, the disk of radius $r_n$ centered at the origin, and let $A_n = \mathbb{D} - \overline \Delta_n$. We define 
$$
\mu_n = \begin{cases}
\mu &  {\rm on}\;\; \overline{\Delta_n}\\
0 & {\rm on}\;\; A_n.
\end{cases}
$$
Then, $\{\mu_n\}$ is a sequence of complex dilatations with compact support such that 
\begin{equation*}
\begin{split}
\|\mu - \mu_n\|_{\ast} & =\|\mu - \mu_n\|_{\infty} + \|\lambda_{\mu - \mu_n}\|_{c}\\
& = \|\mu|_{A_n}\|_{\infty} + \|\lambda_{\mu|_{A_n}}\|_{c} \to 0\\
\end{split}
\end{equation*}
as $n \to \infty$. Indeed, it was proved in \cite{EMS} that the complex dilatation of the Douady-Earle extension of a symmetric homeomorphism is in $M_0(\mathbb{D})$. Combined with the inclusion relation $\rm SS \subset \rm Sym$, we see that $\mu$ belongs to $M_0(\mathbb{D})$, which yields that the first term tends to $0$. 
By the definition of $\mathcal{M}_0(\mathbb{D})$, we have that the second term tends to $0$. 

Since $f^{\nu}$ is bi-Lipschitz under the Poincar\'e metric, $\nu$ induces a biholomorphic automorphism $r_{\nu}^{-1}: \mathcal{M}(\mathbb{D}) \to \mathcal{M}(\mathbb{D})$ (see \cite[Remark 5.1]{SW}). 
Then, we have 
$$
\|r_{\nu}^{-1}(\mu) - r_{\nu}^{-1}(\mu_n)\|_\ast=\|\mu \ast \nu - \mu_n \ast \nu\|_\ast \to 0
$$
as $n \to \infty$. 
The continuity of $\Phi$ yields that 
$$
\|(\Phi(\mu * \nu) - \Phi(\nu)) - (\Phi(\mu_n * \nu) - \Phi(\nu))\|_{\mathcal{B}} = \|\Phi(\mu * \nu) - \Phi(\mu_n * \nu)\|_{\mathcal{B}} \to 0
$$
as $n \to \infty$. We have proved that $\Phi(\mu_n * \nu) - \Phi(\nu) \in \mathcal{B}_0(\mathbb{D}^{*})$ in the first step. Then, it follows from the fact that $\mathcal{B}_0(\mathbb{D}^{*})$ is closed in $\mathcal{B}(\mathbb{D}^{*})$ that 
$\Phi(\mu * \nu) - \Phi(\nu) \in \mathcal{B}_0(\mathbb{D}^{*})$. This proves the inclusion $\subset.$

For the other inclusion $\supset$, it can be proved by using the following claim, which is shown in \cite[Proposition 2.2]{Ma07}. 
\begin{claim}\label{M}
{\rm
Let $f_{\nu}: \widehat{\mathbb{C}} \to \widehat{\mathbb{C}}$ be a quasiconformal homeomorphism with complex dilatation 
$\nu \in M(\mathbb{D})$
that is bi-Lipschitz between $\mathbb D$ and $\Omega=f_{\nu}(\mathbb D)$ under their Poincar\'e metrics, 
and is conformal on $\mathbb{D}^{*}$ with $\mathcal{S}(f_{\nu}|_{\mathbb{D}^{*}}) = \psi$.  
Then, for every $\varphi \in  B_0(\mathbb{D}^{*})$, there exists a quasiconformal homeomorphism 
$\widehat{f}: \widehat{\mathbb{C}} \to \widehat{\mathbb{C}}$ with complex dilatation $\widehat{\mu}$ on $\Omega$ vanishing at the boundary
that is conformal on $\Omega^{*} = f_{\nu}(\mathbb{D}^{*})$ with $\mathcal{S}(\widehat{f}\circ f_{\nu}|_{\mathbb{D}^{*}}) = \varphi + \psi$ such that the following statements are valid: $\widehat{f}$ is decomposed into two quasiconformal homeomorphisms $\widehat{f_0}$ and $\widehat{f_1}$ of $\widehat{\mathbb{C}}$ with $\widehat{f} = \widehat{f_0}\circ\widehat{f_1}$, where $\widehat{f_1}$ is conformal on $\Omega^{*}$ with $\mathcal{S}(\widehat{f_1}\circ f_{\nu}|_{\mathbb{D}^{*}}) = \varphi_1 + \psi$, satisfying the following properties:
\begin{itemize}
\item[(1)] the complex dilatation $\widehat \mu_1$ of $\widehat f_1$ on $\Omega$ satisfies 
\begin{equation*}
|\widehat \mu_1 \circ f_{\nu}(z)| \leqslant \frac{1}{\varepsilon} \rho_{\mathbb{D}^{*}}^{-2}(z^{*})|\varphi_1(z^{*})| \qquad (z^{*} = \bar{z}^{-1})  
\end{equation*}
for some $\varepsilon > 0$ and for every $z \in \mathbb{D}$;
\item[(2)] the support of the complex dilatation $\mu_0$ of the normalized quasiconformal homeo\-morphism $f_0: \mathbb{D} \to \mathbb{D}$, which is conformally conjugate to $\widehat{f_0}: \widehat{f_1}(\Omega)\to \widehat{f}(\Omega)$, is contained in a compact subset of $\mathbb{D}$;
\item[(3)] for the complex dilatation $\mu_1$ of the normalized quasiconformal homeomorphism $f_1: \mathbb{D}\to \mathbb{D}$, which is conformally conjugate to $\widehat{f_1}: \Omega \to \widehat{f_1}(\Omega)$, we have 
\begin{equation*}
\varphi - \varphi_1 = \Phi(\mu_0 * \mu_1 * \nu) - \Phi(\mu_1 * \nu).
\end{equation*}
\end{itemize}
}
\end{claim}
Combining all those maps in the claim above, we have the following commutative diagram, where $g_\nu$, $g_1$, and $g$ are
the conjugating conformal maps:
\begin{center}
\begin{tikzpicture}
\matrix(m)[matrix of math nodes, row sep=5em, column sep=5em]
{ |[name=ka]| \mathbb{D} & |[name=kb]| \mathbb{D} & |[name=kc]| \mathbb{D} & |[name=kd]| \mathbb{D}\\
|[name=ke]|  & |[name=kf]| \Omega  & |[name=kg]| \widehat{f_1}(\Omega) & |[name=kh]| \widehat{f}(\Omega) \\};

\draw[->, thick] 
(ka) edge node[below] {$f^{\nu}$} (kb)
(kb) edge node[below] {$f_1$} (kc)
(kc) edge node[below] {$f_0$} (kd)
(ka) edge node[below, sloped, thick] {$f_{\nu}$} (kf)
(kf) edge node[above] {$\widehat{f_1}$} (kg)
(kg) edge node[above] {$\widehat{f_0}$} (kh)
(kb) edge node[right] {$g_{\nu}$} (kf)
(kc) edge node[right] {$g_1$} (kg)
(kd) edge node[right] {$g$} (kh)
                          ;
\draw[->, thick][bend left] 
(kb) edge node[below] {$f = f_1\circ f_0$} (kd);
\draw[->, thick][bend right] 
(kf) edge node[above] {$\widehat{f} = \widehat{f_1}\circ \widehat{f_0}$} (kh);                          
\end{tikzpicture}\\
\end{center}
We take $\varphi\in \mathcal{B}_0(\mathbb{D}^{*})$ such that $\varphi + \psi \in \beta(T_b)$. Since $\mathcal{B}_0(\mathbb{D}^{*}) \subset B_0(\mathbb{D}^{*})$, there is a quasiconformal homeomorphism $\widehat{f}: \widehat{\mathbb{C}} \to \widehat{\mathbb{C}}$ conformal on $\Omega^{*}$ and asymptotically conformal on $\Omega$ such that $\mathcal{S}(\widehat{f}\circ f_{\nu}|_{\mathbb{D}^{*}}) = \varphi + \psi$. According to the claim above, we consider the decomposition $\widehat{f} = \widehat{f_0}\circ\widehat{f_1}$ together with other maps that appear in it,
and apply the properties shown there. 

Since $\varphi \in \mathcal{B}_0(\mathbb{D}^{*})$, if $\varphi - \varphi_1 \in \mathcal{B}_0(\mathbb{D}^{*})$, then $\varphi_1 \in \mathcal{B}_0(\mathbb{D}^{*})$. By property $(2)$, $\mu_0$ in particular belongs to $\mathcal{M}_0(\mathbb{D})$, and property $(3)$ asserts that 
$\varphi - \varphi_1 =  \Phi(\mu_0 * \mu_1 * \nu) - \Phi(\mu_1 * \nu)$. 
By the previous arguments showing the inclusion $\subset$, we see that $\varphi - \varphi_1 \in \mathcal{B}_0(\mathbb{D}^{*})$. 
Hence, $\varphi_1 \in \mathcal{B}_0(\mathbb{D}^{*})$.

By property $(1)$, $\varphi_1 \in \mathcal{B}_0(\mathbb{D}^{*})$ implies that $\widehat \mu_1 \circ f_{\nu} \in \mathcal{M}_0(\mathbb{D})$
(see Section 6 for details). 
By $|\widehat \mu_1 \circ f_{\nu}| = |\mu_1 \circ f^{\nu}|$, we have $\mu_1 \circ f^{\nu} \in \mathcal{M}_0(\mathbb{D})$. It follows from the bi-Lipschitz continuity of $f^{\nu}$ and \cite[Proposition 3.5]{TWS} that $\mu_1 \in \mathcal{M}_0(\mathbb{D})$. By property $(2)$, the support of the complex dilatation $\mu_0$ of $f_0$ is contained in a compact subset of $\mathbb{D}$. Hence, we see that the complex dilatation $\mu_{f} = \mu_0 * \mu_1$ of $f = f_0 \circ f_1$ belongs to $\mathcal{M}_0(\mathbb{D})$. Since the complex dilatation 
of the quasiconformal homeomorphism $\widehat{f}\circ f_{\nu}$ 
on $\mathbb{D}$ is $r_{\nu}^{-1}(\mu_{f})$, we have that 
$$
\varphi + \psi = \Phi (\mu_0 * \mu_1 * \nu) = \Phi\circ r_{\nu}^{-1}(\mu_{f}) \in \Phi\circ r_{\nu}^{-1}(\mathcal{M}_0(\mathbb{D}))=\beta\circ R_{\tau}^{-1}(T_v) ,
$$
which proves the inclusion $\supset$.
\end{proof}

By this theorem, we have the decomposition of the Bers embedding as 
\begin{equation*}
\beta(T_b) = \bigsqcup_{[\tau] \in T_v\backslash T_b}\beta\circ R_{\tau}^{-1}(T_v) = \bigsqcup_{[\psi] \in \mathcal{B}_0(\mathbb{D}^{*})\backslash \mathcal{B}(\mathbb{D}^{*})}\beta (T_b) \cap (\mathcal{B}_0(\mathbb{D}^{*}) + \psi).
\end{equation*}
Each component $\beta (T_b) \cap (\mathcal{B}_0(\mathbb{D}^{*}) + \psi)$ is biholomorphically equivalent to $T_v \cong \beta(T_v)$. This is the affine foliated structure of $T_b$ induced by $T_v$.

From Theorem \ref{foliated}, we also see that the quotient space $T_v\backslash T_b$ can be identified with a domain in
the quotient Banach space $\mathcal{B}_0(\mathbb{D}^{*})\backslash \mathcal{B}(\mathbb{D}^{*})$. 

\begin{corollary}\label{quotient}
The quotient Bers embedding 
\begin{equation*}
 \widehat{\beta} : T_v\backslash T_b \to \mathcal{B}_0(\mathbb{D}^{*})\backslash \mathcal{B}(\mathbb{D}^{*})  
 \end{equation*}
is well-defined and injective. Moreover, $\widehat{\beta}$ is a homeomorphism of $T_v\backslash T_b$ onto its image. Consequently,  
$T_v\backslash T_b$ possesses a complex structure such that $\widehat{\beta}$ is a biholomorphic automorphism  from $T_v\backslash T_b$ onto its image. 
\end{corollary}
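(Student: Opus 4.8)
The plan is to exhibit $\widehat{\beta}$ as the map induced on quotients by the Bers embedding, that is, to realize the commutative square $\widehat{\beta}\circ p = q\circ\beta$, where $p:T_b\to T_v\backslash T_b$ is the quotient projection carrying its quotient topology and $q:\mathcal{B}(\mathbb{D}^{*})\to\mathcal{B}_0(\mathbb{D}^{*})\backslash\mathcal{B}(\mathbb{D}^{*})$ is the canonical linear quotient map. Concretely I would set $\widehat{\beta}([\tau])=q(\beta(\tau))$ for a representative $\tau$ of a $T_v$-coset. Theorem \ref{foliated} is the only substantial ingredient; everything else is topological bookkeeping built on the standard facts that $\beta$ is a homeomorphism of $T_b$ onto the \emph{domain} $\beta(T_b)$ which is open in $\mathcal{B}(\mathbb{D}^{*})$, that $\mathcal{B}_0(\mathbb{D}^{*})$ is closed so that $q$ is a continuous open linear surjection onto a Banach space, and the elementary identity $R_{\tau}^{-1}(\sigma)=\sigma\ast\tau$.

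First I would check well-definedness and injectivity, both of which fall straight out of Theorem \ref{foliated}. For well-definedness, if $\tau'=\sigma\ast\tau$ with $\sigma\in T_v$, then $\tau'=R_{\tau}^{-1}(\sigma)\in R_{\tau}^{-1}(T_v)$, so Theorem \ref{foliated} gives $\beta(\tau')\in\{\mathcal{B}_0(\mathbb{D}^{*})+\beta(\tau)\}$, whence $\beta(\tau')-\beta(\tau)\in\mathcal{B}_0(\mathbb{D}^{*})$ and $q(\beta(\tau'))=q(\beta(\tau))$. For injectivity, if $q(\beta(\tau_1))=q(\beta(\tau_2))$ then $\beta(\tau_1)\in\beta(T_b)\cap\{\mathcal{B}_0(\mathbb{D}^{*})+\beta(\tau_2)\}=\beta\circ R_{\tau_2}^{-1}(T_v)$, again by Theorem \ref{foliated}; since $\beta$ is injective, $\tau_1\in R_{\tau_2}^{-1}(T_v)$, i.e. $\tau_1=\sigma\ast\tau_2$ for some $\sigma\in T_v$, so $[\tau_1]=[\tau_2]$. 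Thus $\widehat{\beta}$ is a bijection onto $q(\beta(T_b))$.

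Next I would upgrade $\widehat{\beta}$ to a homeomorphism onto its image. Continuity is automatic: $q\circ\beta$ is continuous and $p$ is a quotient map, so the induced $\widehat{\beta}$ is continuous. For openness the crucial point is that $\beta(T_b)$ is open in $\mathcal{B}(\mathbb{D}^{*})$. Given an open $O\subset T_v\backslash T_b$, the set $p^{-1}(O)$ is open in $T_b$, hence $\beta(p^{-1}(O))$ is open in $\beta(T_b)$ and therefore open in $\mathcal{B}(\mathbb{D}^{*})$; since $q$ is an open map, $q(\beta(p^{-1}(O)))$ is open in $\mathcal{B}_0(\mathbb{D}^{*})\backslash\mathcal{B}(\mathbb{D}^{*})$. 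As $p$ is surjective and $\widehat{\beta}\circ p=q\circ\beta$, this set equals $\widehat{\beta}(O)$. Taking $O=T_v\backslash T_b$ shows in particular that the image $q(\beta(T_b))$ is open in the quotient Banach space, and for general $O$ it shows that $\widehat{\beta}$ is an open map; combined with continuity and bijectivity, $\widehat{\beta}$ is a homeomorphism onto the open set $q(\beta(T_b))$. I note that this argument does not require $p$ itself to be open, which conveniently sidesteps any delicacy about left translations in the partial topological group.

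Finally, since $\widehat{\beta}$ is a homeomorphism onto the open subset $q(\beta(T_b))$ of the Banach space $\mathcal{B}_0(\mathbb{D}^{*})\backslash\mathcal{B}(\mathbb{D}^{*})$, I would simply transport the complex structure: declare $\widehat{\beta}$ to be a global chart, so that $T_v\backslash T_b$ acquires the complex Banach-manifold structure modeled on $\mathcal{B}_0(\mathbb{D}^{*})\backslash\mathcal{B}(\mathbb{D}^{*})$ for which $\widehat{\beta}$ is tautologically biholomorphic onto its image. The genuine content of the corollary is entirely concentrated in Theorem \ref{foliated}, which identifies the $\beta$-image of each $T_v$-orbit with a full affine slice of $\beta(T_b)$; once that identification is in hand the rest is routine. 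The one step I would watch most closely is the openness argument, where it is essential to use that $\beta(T_b)$ is a domain in $\mathcal{B}(\mathbb{D}^{*})$ (so that $\beta$ carries open sets to sets open in the ambient Banach space) rather than merely open in some submodel; this is precisely what allows the open map $q$ to push openness down to the quotient.
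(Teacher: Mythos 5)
Your proof is correct, and in outline it is the paper's own argument: well-definedness and injectivity are read off from Theorem \ref{foliated} exactly as you do (your identity $R_{\tau}^{-1}(\sigma)=\sigma\ast\tau$ is the right translation dictionary), the homeomorphism statement comes from the commutative square $\widehat{\beta}\circ p = P\circ\beta$ (the paper writes $P$ for your $q$), and the complex structure is transported through the global chart $\widehat{\beta}$ at the end. The one genuine divergence is in the topological step, and it is in your favor. The paper proves that \emph{both} quotient maps are open: for $P$ via $P^{-1}(P(U))=\bigcup_{\varphi\in\mathcal{B}_0(\mathbb{D}^{*})}(U+\varphi)$, which you implicitly use, but also for $p$ via the formula $p^{-1}(p(V))=\bigcup_{\tau\in T_v}R_{\tau}(V)$. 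As written, that formula computes the saturation $V\ast T_v$ appropriate to right cosets, whereas the fibers supplied by Theorem \ref{foliated} are the left cosets $R_{\tau}^{-1}(T_v)=T_v\ast\tau$ (this is also how Corollary \ref{biholomorphic} reads the quotient); the correct saturation is $\bigcup_{\sigma\in T_v}\sigma\ast V$, whose openness requires left translations by elements of $T_v$ to be homeomorphisms --- true, but only because ${\rm SS}$ is the \emph{characteristic} topological subgroup of the partial topological group ${\rm SQS}$, a nontrivial input. Your argument never needs openness of $p$: you get continuity of $\widehat{\beta}$ from the universal property of the quotient topology applied to $q\circ\beta$, and openness of $\widehat{\beta}$ by pushing open sets through $\beta$ (using, as you rightly stress, that $\beta(T_b)$ is open in the ambient $\mathcal{B}(\mathbb{D}^{*})$, not merely in some submodel) and then through the open linear map $q$. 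This is cleaner than the paper's version and, as you observe, sidesteps exactly the left-translation delicacy that the paper's openness claim for $p$ quietly relies on.
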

\begin{proof}
The well-definedness and injectivity of the map 
\begin{equation*}
 \widehat{\beta} : T_v\backslash T_b \to \mathcal{B}_0(\mathbb{D}^{*})\backslash \mathcal{B}(\mathbb{D}^{*})  
 \end{equation*}
are direct consequences from Theorem \ref{foliated}. 

We will show that the quotient Bers embedding  $\widehat{\beta}$ is also a homeomorphism from $T_v\backslash T_b$ onto its image. 
For the quotient maps $p: T_b \to T_v\backslash T_b$ and $P: \mathcal{B}(\mathbb{D}^{*}) \to \mathcal{B}_0(\mathbb{D}^{*})\backslash \mathcal{B}(\mathbb{D}^{*})$, the following commutative diagram holds:

\begin{center}
\begin{tikzpicture}
\matrix(m)[matrix of math nodes, row sep=5em, column sep=5em]
{ |[name=ka]| T_b & |[name=kb]| \mathcal{B}(\mathbb{D}^{*})\\
|[name=kc]| T_v\backslash T_b & |[name=kd]| \mathcal{B}_0(\mathbb{D}^{*})\backslash \mathcal{B}(\mathbb{D}^{*})\\};

\draw[->, thick] (ka) edge node[auto] {$\beta$} (kb)
                         (ka) edge node[auto] {$p$} (kc)
                         (kc) edge node[auto] {$\widehat{\beta}$} (kd)
                         (kb) edge node[auto] {$P$} (kd)
                         ;
\end{tikzpicture}\\
\end{center}

For an arbitrary open subset $V \subset T_b$, we have 
\begin{equation*}
p^{-1}(p(V)) = \bigcup_{\tau \in T_v} R_{\tau}(V).
\end{equation*}
This shows that $p$ is an open map. In the same way,  
for an arbitrary open subset $U \subset \mathcal{B}(\mathbb{D}^{*})$, we have
\begin{equation*}
P^{-1}(P(U)) = \bigcup_{\varphi \in \mathcal{B}_0(\mathbb{D}^{*})} (U + \varphi).
\end{equation*}
This shows that $P$ is an open map. 
Moreover, the Bers embedding $\beta: T_b \to \mathcal{B}(\mathbb{D}^{*})$ is a homeomorphism from $T_b$ onto its image. Thus, $\widehat{\beta}$ is open and continuous. Combined with the injectivity of $\widehat{\beta}$, this implies that $\widehat{\beta}$ is a homeomorphism of $T_v\backslash T_b$ onto its image. 
\end{proof}

Concerning biholomorphic automorphisms of $p(T_b)=T_v \backslash T_b$
with respect to its complex structure, we have the following.
This kind of arguments are well-known in the theory of asymptotic Teichm\"uller spaces.

\begin{corollary}\label{biholomorphic}
For every $\tau \in T_b$, the biholomorphic automorphism $R_\tau$ of $T_b$ induces a
biholomorphic automorphism $\widehat R_\tau$ of $p(T_b)$ that satisfies
$p \circ R_\tau=\widehat R_\tau \circ p$.
\end{corollary}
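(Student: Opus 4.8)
The plan is to first descend $R_\tau$ to the quotient purely group-theoretically, and then to upgrade the resulting bijection to a biholomorphism by transporting everything to the Banach quotient model supplied by Corollary \ref{quotient}.

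First I would recall that, under the identification $T_v\backslash T_b={\rm SS}\backslash{\rm SQS}$, the fibres of $p$ are exactly the left cosets $T_v\ast\sigma$ of the subgroup $T_v\subset(T_b,\ast)$, so that $p(\sigma_1)=p(\sigma_2)$ if and only if $\sigma_1\ast\sigma_2^{-1}\in T_v$. Since left and right translations of a group commute, for any $\kappa\in T_v$ one has $R_\tau(\kappa\ast\sigma)=\kappa\ast\sigma\ast\tau^{-1}=\kappa\ast R_\tau(\sigma)$; hence $R_\tau$ carries the fibre $T_v\ast\sigma$ bijectively onto the fibre $T_v\ast R_\tau(\sigma)$. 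This shows that the assignment $\widehat R_\tau(p(\sigma))\doteq p(R_\tau(\sigma))$ is well defined and satisfies $p\circ R_\tau=\widehat R_\tau\circ p$ by construction. From the composition law $R_\tau\circ R_{\tau'}=R_{\tau\ast\tau'}$ (a direct computation from $R_\tau(\sigma)=\sigma\ast\tau^{-1}$) together with the surjectivity of $p$, I would deduce $\widehat R_\tau\circ\widehat R_{\tau'}=\widehat R_{\tau\ast\tau'}$, so that $\widehat R_\tau$ is a bijection with inverse $\widehat R_{\tau^{-1}}$.

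Next I would check that $\widehat R_\tau$ is a homeomorphism. This is immediate from the openness of $p$ established in the proof of Corollary \ref{quotient}: for open $O\subset T_v\backslash T_b$ one has $p^{-1}(\widehat R_\tau^{-1}(O))=R_\tau^{-1}(p^{-1}(O))$, which is open because $p$ and $R_\tau$ are continuous, so $\widehat R_\tau^{-1}(O)$ is open by the definition of the quotient topology. Applying the same to $\widehat R_{\tau^{-1}}=\widehat R_\tau^{-1}$ gives the homeomorphism property; in particular $\widehat R_\tau$ is continuous, hence locally bounded once transported into the Banach model.

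Finally, for holomorphy I would work in the model provided by $\widehat\beta$, writing $g=\widehat\beta\circ\widehat R_\tau\circ\widehat\beta^{-1}$ on the open set $\widehat\beta(T_v\backslash T_b)\subset\mathcal{B}_0(\mathbb{D}^{*})\backslash\mathcal{B}(\mathbb{D}^{*})$ and $\tilde R_\tau=\beta\circ R_\tau\circ\beta^{-1}$, a biholomorphism of the domain $\beta(T_b)\subset\mathcal{B}(\mathbb{D}^{*})$. Combining $\widehat\beta\circ p=P\circ\beta$ with $p\circ R_\tau=\widehat R_\tau\circ p$ yields the intertwining relation $g\circ P=P\circ\tilde R_\tau$ on $\beta(T_b)$. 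The delicate point, and the one I expect to be the main obstacle, is that the linear quotient projection $P$ need not admit a holomorphic local section (the closed subspace $\mathcal{B}_0(\mathbb{D}^{*})$ is not known to be complemented in $\mathcal{B}(\mathbb{D}^{*})$), so one cannot simply write $g=P\circ\tilde R_\tau\circ s$ for a holomorphic section $s$. I would bypass this by testing with the dual. For a continuous linear functional $\ell$ on the quotient, $\ell\circ P$ is a continuous linear functional on $\mathcal{B}(\mathbb{D}^{*})$, hence $(\ell\circ P)\circ\tilde R_\tau$ is holomorphic on $\beta(T_b)$. Given a base point $w_0=P(z_0)$ and a direction $v=P(u)$ in the quotient, linearity of $P$ lifts the complex line $\zeta\mapsto w_0+\zeta v$ to $\zeta\mapsto z_0+\zeta u$, which stays in $\beta(T_b)$ for small $\zeta$; the intertwining relation then gives $\ell(g(w_0+\zeta v))=(\ell\circ P)(\tilde R_\tau(z_0+\zeta u))$, holomorphic in $\zeta$. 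Thus $\ell\circ g$ is G\^ateaux holomorphic and, being continuous, holomorphic. Since this holds for every $\ell$ and $g$ is locally bounded, the standard equivalence of weak and strong holomorphy for Banach-valued maps shows that $g$ is holomorphic. Running the same argument for $g^{-1}=\widehat\beta\circ\widehat R_{\tau^{-1}}\circ\widehat\beta^{-1}$ shows $g$ is biholomorphic, whence $\widehat R_\tau$ is a biholomorphic automorphism of $p(T_b)$.
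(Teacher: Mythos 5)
Your proof is correct, and its first three steps coincide with the paper's: the well-definedness of $\widehat R_\tau$ via $R_\tau(T_v\ast\sigma)=T_v\ast(\sigma\ast\tau^{-1})$, bijectivity via $R_\tau^{-1}=R_{\tau^{-1}}$, and the homeomorphism property via the quotient topology (as in Corollary \ref{quotient}). Where you take a genuinely different route is the holomorphy step. The paper works directly at the level of derivatives: since $\widetilde R_\varphi$ maps the affine cosets $\mathcal{B}_0(\mathbb{D}^{*})+\phi$ into cosets (Theorem \ref{foliated}) and $\mathcal{B}_0(\mathbb{D}^{*})$ is closed, difference quotients show that $d_{\phi_1}\widetilde R_\varphi(\psi_1)-d_{\phi_2}\widetilde R_\varphi(\psi_2)\in\mathcal{B}_0(\mathbb{D}^{*})$ whenever $\phi_1-\phi_2$ and $\psi_1-\psi_2$ lie in $\mathcal{B}_0(\mathbb{D}^{*})$, so a bounded quotient operator $A_{[\phi]}^\varphi$ with $A_{[\phi]}^\varphi\circ P=P\circ d_\phi\widetilde R_\varphi$ and $\Vert A_{[\phi]}^\varphi\Vert\leqslant\Vert d_\phi\widetilde R_\varphi\Vert$ is well defined; choosing lifts with $\Vert\psi\Vert\leqslant 2\Vert[\psi]\Vert$ and using $\Vert P(\cdot)\Vert\leqslant\Vert\cdot\Vert$, the paper then verifies the Fr\'echet estimate $\Vert\widehat R_\varphi([\phi]+[\psi])-\widehat R_\varphi([\phi])-A_{[\phi]}^\varphi([\psi])\Vert=o(\Vert[\psi]\Vert)$ by hand. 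You instead bypass derivatives entirely: from the intertwining $g\circ P=P\circ\widetilde R_\tau$ you lift complex lines through the surjective linear quotient map $P$ (legitimate since $\beta(T_b)$ is open and $P$ is linear), conclude that $\ell\circ g$ is G\^ateaux holomorphic for every continuous functional $\ell$ on the quotient, and combine local boundedness (from continuity of $\widehat R_\tau$) with the Dunford-type equivalence of weak and strong holomorphy; this is a valid and clean soft argument, and both routes correctly identify the same underlying obstacle, namely that $\mathcal{B}_0(\mathbb{D}^{*})$ is not known to be complemented, so no holomorphic local section of $P$ is available. What your approach buys is brevity and generality (it would work for any biholomorphism of the total domain preserving the affine foliation); what the paper's approach buys is the explicit derivative formula $d_{[\phi]}\widehat R_\varphi=A_{[\phi]}^\varphi$ together with its norm bound, which is precisely what Section 6 exploits to define and control the quotient Carleson metric $\widehat m_C$ and its invariance under $\widehat R_\tau$ --- with your argument one would still have to identify the derivative of $g$ afterwards.
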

\begin{proof}
For each $\sigma \in T_b$, we have that $R_\tau(T_v \ast \sigma)=T_v \ast (\sigma \ast \tau^{-1})$.
This shows that the correspondence $[\sigma] \mapsto [\sigma \ast \tau^{-1}]$ is well-defined to be
a map $\widehat R_\tau: p(T_b) \to p(T_b)$ that satisfies $p \circ R_\tau=\widehat R_\tau \circ p$.
By considering the inverse mapping $R_\tau^{-1}=R_{\tau^{-1}}$,
we see that $\widehat R_\tau$ is bijective. In the same way as the proof of Corollary \ref{quotient},
$\widehat R_\tau$ is shown to be a homeomorphism. For the statement, it suffices to prove that  
$\widehat R_\tau$ is holomorphic. 

We may identify $T_b$ with the domain $\beta(T_b)$ in $\mathcal{B}(\mathbb{D}^{*})$.
The conjugate $\widetilde R_\varphi=\beta \circ R_\tau \circ \beta^{-1}$ for $\varphi=\beta(\tau)$ 
is a biholomorphic automorphism of $\beta(T_b) \subset {\mathcal B}(\mathbb D^*)$.
We use its projection $\widehat R_\varphi$ to $P(\beta(T_b))=\widehat \beta(p(T_b))$ as a replacement of $\widehat R_\tau$,
which satisfies $P \circ \widetilde R_\varphi=\widehat R_\varphi \circ P$.
Let $\phi_1, \phi_2 \in \beta(T_b)$ with $\phi_1-\phi_2 \in {\mathcal B}_0(\mathbb D^*)$ and
let $\psi_1, \psi_2 \in {\mathcal B}(\mathbb D^*)$ with $\psi_1-\psi_2 \in {\mathcal B}_0(\mathbb D^*)$.
The derivative of $\widetilde R_\varphi$ satisfies
\begin{equation*}
\begin{split}
d_{\phi_1}\widetilde R_\varphi(\psi_1)&=\lim_{t \to 0} \frac{1}{t}(\widetilde R_\varphi(\phi_1+t\psi_1)-\widetilde R_\varphi(\phi_1)),\\
d_{\phi_2}\widetilde R_\varphi(\psi_2)&=\lim_{t \to 0} \frac{1}{t}(\widetilde R_\varphi(\phi_2+t\psi_2)-\widetilde R_\varphi(\phi_2)),
\end{split}
\end{equation*}
where the limits refer to the convergence in the norm.
From this, we see that $d_{\phi_1}\widetilde R_\varphi(\psi_1)-d_{\phi_2}\widetilde R_\varphi(\psi_2)$ belongs to ${\mathcal B}_0(\mathbb D^*)$
because ${\mathcal B}_0(\mathbb D^*)$ is closed and
\begin{equation*}
\begin{split}
&\quad\ \{\widetilde R_\varphi(\phi_1+t\psi_1)-\widetilde R_\varphi(\phi_1)\}-
\{\widetilde R_\varphi(\phi_2+t\psi_2)-\widetilde R_\varphi(\phi_2)\}\\
&=\{\widetilde R_\varphi(\phi_1+t\psi_1)-\widetilde R_\varphi(\phi_2+t\psi_2)\}-\{\widetilde R_\varphi(\phi_1)-\widetilde R_\varphi(\phi_2)\}
\end{split}
\end{equation*}
belongs to ${\mathcal B}_0(\mathbb D^*)$.
Thus, for every $[\phi] \in P(\beta(T_b))$, a linear map 
$A_{[\phi]}^\varphi:{\mathcal B}_0(\mathbb D^*)\backslash {\mathcal B}(\mathbb D^*) \to {\mathcal B}_0(\mathbb D^*)\backslash {\mathcal B}(\mathbb D^*)$
is well-defined by $A_{[\phi]}^\varphi ([\psi])= [d_{\phi}\widetilde R_\varphi(\psi)]$.
This satisfies $A_{[\phi]}^\varphi \circ P=P \circ d_\phi \widetilde R_\varphi$.

The linear operator $A_{[\phi]}^\varphi$ is bounded and the operator norm satisfies 
$\Vert A_{[\phi]}^\varphi \Vert \leqslant \Vert d_\phi \widetilde R_\varphi \Vert$.
Indeed, for every $[\psi] \in {\mathcal B}_0(\mathbb D^*)\backslash {\mathcal B}(\mathbb D^*)$ and every $\varepsilon >0$,
we choose $\psi \in {\mathcal B}(\mathbb D^*)$ such that $P(\psi)=[\psi]$ and 
$\Vert \psi \Vert \leqslant \Vert [\psi] \Vert+\varepsilon$. Then,
\begin{equation*}
\Vert A_{[\phi]}^\varphi([\psi]) \Vert=\Vert P \circ d_\phi \widetilde R_\varphi(\psi) \Vert
\leqslant \Vert d_\phi \widetilde R_\varphi(\psi) \Vert \leqslant \Vert d_\phi \widetilde R_\varphi \Vert \cdot \Vert \psi \Vert
\leqslant \Vert d_\phi \widetilde R_\varphi \Vert(\Vert [\psi] \Vert+\varepsilon).
\end{equation*}
Moreover, since we may assume that $\Vert \psi \Vert \leqslant 2\Vert [\psi] \Vert$
in the above choice of $\psi$, we have that 
\begin{equation*}
\begin{split}
&\quad\ \Vert \widehat R_\varphi([\phi]+[\psi])-\widehat R_\varphi([\phi])-A_{[\phi]}^\varphi([\psi]) \Vert \\
&=\Vert P \circ \widetilde R_\varphi(\phi+\psi)-P \circ \widetilde R_\varphi(\phi)-P \circ d_\phi \widetilde R_\varphi(\psi) \Vert \\
&\leqslant \Vert \widetilde R_\varphi(\phi+\psi)-\widetilde R_\varphi(\phi)-d_\phi \widetilde R_\varphi(\psi) \Vert =o(\Vert [\psi] \Vert).
\end{split}
\end{equation*}
This implies that $\widehat R_\varphi$ is differentiable at every $[\phi] \in P(\beta(T_b))$ in every direction
$[\psi] \in {\mathcal B}_0(\mathbb D^*)\backslash {\mathcal B}(\mathbb D^*)$ with the derivative
$d_{[\phi]} \widehat R_\varphi([\psi])=A_{[\phi]}^\varphi([\psi])$.
\end{proof}

\section{The Carleson metric and its quotient} 

In this section, we consider translation-invariant metrics on the BMO Teich\-m\"ul\-ler space $T_b$ and its quotient space 
$p(T_b)=T_v \backslash T_b$.
A translation-invariant metric $m=m(x,v)$ defined on the tangent bundle of $T_b$ satisfies $R_\tau^* m=m$ for every $\tau \in T_b$.
Here, the pull-back $R_\tau^* m$ of the metric $m$ by the automorphism $R_\tau$ is given by $R_\tau^* m(x,v)=m(R_\tau(x),d_\tau R_\tau(v))$.
Since $T_b$ is a complex manifold, the Kobayashi and the Carath\'eodory metrics are examples of such metrics
although they are in fact invariant under all biholomorphic automorphisms of $T_b$.

We define the following translation-invariant metric on $T_b$ in a canonical way.
For simplicity, the metric is given in the Bers embedding $\beta(T_b)$. As before,
we use the conjugate of the right translation $R_\tau$ of $T_b$ for $\tau \in T_b$ by the Bers embedding $\beta$,
which is the biholomorphic automorphism $\widetilde R_\varphi=\beta \circ R_\tau \circ \beta^{-1}$ of $\beta(T_b)$
for $\varphi=\beta(\tau)$. The derivative $d_\phi \widetilde R_\varphi: \mathcal{B}(\mathbb{D}^{*}) \to \mathcal{B}(\mathbb{D}^{*})$ at any point $\phi \in \beta(T_b)$ is a bounded linear operator.

\begin{definition}
{\rm
A translation-invariant metric $m_C$ at any point $\varphi \in \beta(T_b) \subset \mathcal{B}(\mathbb{D}^{*})$ 
and for any tangent vector $\psi \in \mathcal{B}(\mathbb{D}^{*})$ is 
defined to be $m_C(\varphi,\psi)=\Vert d_\varphi \widetilde R_\varphi \psi \Vert_{\mathcal B}$. 
We call this metric $m_C$ the Carleson metric on the BMO Teich\-m\"ul\-ler space $T_b \cong \beta(T_b)$. The pseudo-distance induced by this metric is denoted by $d_C(\cdot, \cdot)$,
which we call the Carleson distance.
}
\end{definition}

We note that for a smooth curve $\gamma=\gamma(t)$ in $\beta(T_b) \subset \mathcal{B}(\mathbb{D}^{*})$ 
with parameter $t \in [a,b]$,
its length $l_C(\gamma)$ is defined by the upper integral as
$$
l_C(\gamma)=\overline{\int_a^b} m_C(\gamma(t),\dot \gamma(t))dt.
$$
Then, the Carleson distance $d_C(\varphi_1,\varphi_2)$ is the infimum of $l_C(\gamma)$ taken over all smooth curves $\gamma$ connecting
$\varphi_1$ and $\varphi_2$.

Here is a list of intended results on the Carleson metric.
Concerning the classical case of the Teich\-m\"ul\-ler metric, we refer to
the work by Earle, Gardiner, and Lakic \cite{EGL}.
\begin{enumerate}
\item
A similar metric to the Carleson metric $m_C$ is introduced by considering extremal Beltrami coefficients in $\mathcal{M}(\mathbb D)$,
which is metrically equivalent to $m_C$.
\item
A predual space to $\mathcal{L}(\mathbb D)$ or $\mathcal{B}(\mathbb{D}^{*})$ is characterized and utilized to consider the metric.
\item
Under a certain smoothness of $m_C$, the BMO Teich\-m\"ul\-ler space $T_b$ is
equipped with a Finsler structure.
\item
The quotient metric of $m_C$ provides a Finsler structure for $p(T_b)$.
\item
The Carleson distance can be compared with the distance induced by the BMO norm.
\item
There is also a certain inequality between $m_C$ and the Kobayashi metric on $T_b$.
\end{enumerate}

In this section, we only prove that the Carleson metric restricted to
$T_v$ is continuous. In the next section, 
we prove that $T_b$ is complete with respect to the Carleson distance and certain relations
between the Carleson distance and the Teich\-m\"ul\-ler-Kobayashi distance.
 
Let $U(r) \subset \mathcal{B}(\mathbb{D}^{*})$ and $U^{\infty}(r) \subset B(\mathbb{D}^{*})$ denote the open balls of radius $r$ centered at the origin. We set $\delta_0 = 2/L,$ where $L$ is the absolute constant satisfying the condition $\Vert \varphi \Vert_{B} \leqslant L \Vert \varphi \Vert_{\mathcal{B}}$ as in \cite[Lemma 4.1]{SW}. More precisely, \cite{SW} handles the case for $\mathbb D$,
but if we note that $z^4\varphi(z)$ is a holomorphic function in $\mathbb D^*$ for every $\varphi \in B(\mathbb D^*)$,
the maximum principle justifies its arguments and the constant $L$ can be computed explicitly.
Then $U(\delta_0) \subset U^{\infty}(2) \subset \beta(T)$. 

We recall that a holomorphic local section of $\Phi:M(\mathbb D) \to B(\mathbb D^*)$ at the origin $0 \in B(\mathbb{D}^{*})$ can be given explicitly by Ahlfors and Weill \cite{AW}. The following form is the adaptation to the unit disk case.  For every $\varphi \in U^{\infty}(2)$, let 
\begin{equation*}
\sigma(\varphi) (z) =  -\frac{1}{2}\rho_{\mathbb{D}^{*}}^{-2}(z^{*}) (zz^{*})^2\varphi(z^{*}).
\end{equation*}
Then, $\nu(z) = \sigma(\varphi)(z)$ belongs to $M(\mathbb{D})$ and satisfies $\Phi(\nu) = \varphi$. Here, 
$z^{*} = 1/ \bar{z} =j(z) \in \mathbb{D}^{*}$ is the reflection of 
$z \in \mathbb{D}$ with respect to $\mathbb{S}$. Hence, $\sigma: U^{\infty}(2) \to M(\mathbb{D})$ is a holomorphic local section of $\Phi$ around $0$. Noting that $|\sigma(\varphi)(z)| = \frac{1}{2}|\varphi(z^{\ast})|(|z^{\ast}|^2 - 1)^2$, we see that $\Vert \sigma(\varphi) \Vert_{\infty} = \frac{1}{2}\Vert \varphi \Vert_{B} \leqslant \frac{1}{2}L\Vert \varphi \Vert_{\mathcal{B}}$ for every $\varphi \in U(\delta_0)$. Moreover,
we have that
$$
\lambda_{\sigma(\varphi)} = \frac{1}{4}\big(|\varphi(j(z))|^2(|j(z)|^2 - 1)^3 \big)\,\left|j_{\bar{z}}(z)\right|dxdy, 
$$
which means that $\lambda_{\sigma(\varphi)}$ is the pull-back of the Carleson measure $\frac{1}{4}|\varphi(z^{\ast})|^2(|z^{\ast}|^2 - 1)^3 dx^{\ast}dy^{\ast}$ on $\mathbb{D}^{\ast}$ by $j(z)=z^{\ast}$. Hence, 
$$\Vert \sigma(\varphi) \Vert_{\ast} = \Vert \sigma(\varphi) \Vert_{\infty} + \Vert \lambda_{\sigma(\varphi)}\Vert_{c}^{\frac{1}{2}} \leqslant \frac{1}{2}(L + M)\Vert \varphi \Vert_{\mathcal{B}}.$$
Here, the constant $M$ is the norm of the pull-back operator as in  \cite[Theorem 3.4]{WZ}. 
We can also obtain such a constant $M$ by a direct computation of the Carleson norm of $\lambda_{\sigma(\varphi)}$.
Consequently, $\sigma$ is well-defined from $U(\delta_0)$ onto its image in $\mathcal{M}(\mathbb{D})$ and the operator norm of $\sigma$ is bounded by $\frac{1}{2}(L + M)$. By the linearity of $\sigma$, we see that $\Vert d\sigma \Vert \leqslant \frac{1}{2}(L + M)$. 

We borrow the following discussion from Takhtajan and Teo \cite{TT}. 

\begin{lemma}\label{convergence}
For $\nu = \sigma(\varphi)$, $f^{\nu}$ and $(f^{\nu})^{-1}$ are bi-Lipschitz continuous under the Poincar\'e metric if 
$\varphi \in U^{\infty}(2)$ belongs to a small neighborhood of the origin of $B(\mathbb{D}^{*})$, 
and $\partial f^{\nu}$ and $\partial (f^{\nu})^{-1}$ converge locally uniformly to $1$ as $\varphi \to 0$ in $B(\mathbb{D}^{*})$.
\end{lemma}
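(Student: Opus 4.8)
The plan is to separate the two assertions: the local uniform convergence $\partial f^{\nu} \to 1$ is a soft consequence of the dependence of normalized solutions of the Beltrami equation on the dilatation, whereas the bi-Lipschitz property under the Poincar\'e metric is a uniform statement \emph{up to the boundary} and must use the explicit reflection structure of $\nu = \sigma(\varphi)$. I begin with the elementary smallness input: from the formula for the Ahlfors--Weill section one has $\Vert \nu \Vert_{\infty} = \tfrac12 \Vert \varphi \Vert_{B}$, so $\nu \to 0$ in $L^{\infty}(\mathbb{D})$ and the maximal dilatation of $f^{\nu}$ tends to $1$ as $\varphi \to 0$ in $B(\mathbb{D}^{*})$. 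The crucial reduction is to factor $f^{\nu}$ through the Ahlfors--Weill extension. Let $f_{\nu}:\widehat{\mathbb{C}}\to\widehat{\mathbb{C}}$ be conformal on $\mathbb{D}^{*}$ with $\mathcal{S}(f_{\nu}|_{\mathbb{D}^{*}}) = \varphi$ and with dilatation $\nu$ on $\mathbb{D}$, and put $\Omega = f_{\nu}(\mathbb{D})$, $\Gamma = f_{\nu}(\mathbb{S})$. Since $f^{\nu}$ and $f_{\nu}|_{\mathbb{D}}$ share the dilatation $\nu$, there exist a conformal $G:\Omega\to\mathbb{D}$ and a M\"obius transformation $M$ of $\mathbb{D}$ with $f^{\nu} = M\circ G\circ f_{\nu}|_{\mathbb{D}}$; as $M$ and $G$ are isometries for the relevant hyperbolic metrics, the infinitesimal hyperbolic stretching of $f^{\nu}$ is intrinsic to $f_{\nu}|_{\mathbb{D}}$:
\begin{equation*}
\rho_{\mathbb{D}}(f^{\nu}(z))\,|\partial f^{\nu}(z)| = \rho_{\Omega}(f_{\nu}(z))\,|\partial f_{\nu}(z)|, \qquad z \in \mathbb{D},
\end{equation*}
and likewise with $|\partial f_{\nu}| \pm |\bar\partial f_{\nu}|$ in place of $|\partial f_{\nu}|$.

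For the bi-Lipschitz claim it therefore suffices to bound $\rho_{\Omega}(f_{\nu}(z))\,(|\partial f_{\nu}(z)| \pm |\bar\partial f_{\nu}(z)|)/\rho_{\mathbb{D}}(z)$ from above and below, uniformly in $z \in \mathbb{D}$, by constants tending to $1$ as $\varphi \to 0$. First I would invoke the Koebe comparison $\rho_{\Omega}(w) \asymp \mathrm{dist}(w,\Gamma)^{-1}$ together with $\rho_{\mathbb{D}}(z) \asymp (1-|z|)^{-1}$, which reduces the matter to the estimate $|\partial f_{\nu}(z)|\,(1-|z|) \asymp \mathrm{dist}(f_{\nu}(z),\Gamma)$. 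Then I would insert the explicit Ahlfors--Weill formula expressing $f_{\nu}$ on $\mathbb{D}$ through the values $F(z^{*})$, $F'(z^{*})$, $F''(z^{*})$ of the conformal map $F = f_{\nu}|_{\mathbb{D}^{*}}$ at $z^{*} = 1/\bar z$, compute $\partial f_{\nu}(z)$ and $\mathrm{dist}(f_{\nu}(z),\Gamma)$ from it, and finally apply the Kraus--Nehari distortion estimates for a conformal map whose Schwarzian has small norm $\Vert \varphi \Vert_{B}$ (so that $F$ is uniformly close to a M\"obius map and $\Omega$ close to a round disk) to conclude that the ratio above equals $1 + O(\Vert \varphi \Vert_{B})$ uniformly in $z$. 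This makes $f^{\nu}$ bi-Lipschitz under the Poincar\'e metric with constant tending to $1$; the inverse $(f^{\nu})^{-1}$ is then automatically bi-Lipschitz with the reciprocal constant.

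The convergence of the derivatives is the easier half. For $z$ in a fixed compact subset of $\mathbb{D}$ the point $z^{*}=1/\bar z$ stays in a compact subset of $\mathbb{D}^{*}$, and since $F\to\mathrm{id}$ together with $F',F''\to 1,0$ locally uniformly on $\mathbb{D}^{*}$ as $\varphi\to 0$ (continuous dependence of the normalized conformal map on its Schwarzian), the Ahlfors--Weill formula, being algebraic in $F,F',F''$, gives $f_{\nu}\to\mathrm{id}$ and $\partial f_{\nu}\to 1$, $\bar\partial f_{\nu}\to 0$ locally uniformly on $\mathbb{D}$; equivalently one may argue directly through the Beurling transform, whose $L^{p}$-norm is close to $1$ for $p$ near $2$, so that the smallness of $\Vert\nu\Vert_{\infty}$ yields $L^{p}_{\rm loc}$ control of $\partial f^{\nu}-1$ and hence, by Sobolev embedding, Hölder and local uniform convergence. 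Together with $G\to\mathrm{id}$ and $M\to\mathrm{id}$ in the factorization, this gives $\partial f^{\nu} = M'\,G'\,\partial f_{\nu}\to 1$ locally uniformly; the same applied to the inverse (whose dilatation has the same sup-norm) gives $\partial(f^{\nu})^{-1}\to 1$. I expect the main obstacle to be precisely the uniform-up-to-$\mathbb{S}$ estimate of the second paragraph: the interior convergence of $\partial f^{\nu}$ controls nothing as $|z|\to 1$, where $\rho_{\mathbb{D}}$ blows up, so one cannot bypass the explicit reflection structure of $\sigma(\varphi)$ and the distortion theory of conformal maps with small Schwarzian. The careful bookkeeping in the Ahlfors--Weill formula and the verification that the Koebe comparison constants can be taken close to $1$ for small $\Vert\varphi\Vert_{B}$ are where the real work lies, following the discussion of Takhtajan and Teo \cite{TT}.
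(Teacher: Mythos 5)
Your outline is mathematically sound, but it relates to the paper's proof in an unusual way: the paper proves this lemma essentially by citation. It quotes \cite[Lemma 2.5]{TT}, which asserts that for every $\varepsilon>0$ there is $\delta>0$ such that for all $\nu\in\sigma(U^{\infty}(2))$ with $\Vert\nu\Vert_{\infty}<\delta$ one has
\begin{equation*}
\Bigl|\,\frac{|\partial f^{\nu}(z)|^{2}}{(1-|f^{\nu}(z)|^{2})^{2}}-\frac{1}{(1-|z|^{2})^{2}}\,\Bigr|<\frac{\varepsilon}{(1-|z|^{2})^{2}} \qquad (z\in\mathbb{D}),
\end{equation*}
and the same for $(f^{\nu})^{-1}$; hence the hyperbolic stretch $\tfrac{1-|z|^{2}}{1-|f^{\nu}(z)|^{2}}\,|\partial f^{\nu}(z)|$ lies in $(\sqrt{1-\varepsilon},\sqrt{1+\varepsilon})$, which is the bi-Lipschitz statement, and combined with the uniform convergence $f^{\nu}\to\mathrm{id}$ this also yields the convergence of the derivatives. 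What you propose is, in substance, a reconstruction of the proof of that cited lemma: the factorization $f^{\nu}=M\circ G\circ f_{\nu}|_{\mathbb{D}}$, the conformal invariance of the hyperbolic stretch, and the reduction to distortion estimates for the Ahlfors--Weill extension of a conformal map with small Schwarzian are exactly the mechanism behind \cite[Lemma 2.5]{TT}, and your diagnosis that the boundary-uniform estimate cannot follow from interior convergence but must exploit the reflection structure of $\sigma(\varphi)$ is correct. So the two routes are the same mathematics packaged differently: the paper buys a two-line proof at the cost of a black box, while yours is self-contained at the cost of redoing the hard uniform estimate.

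Two local corrections to your sketch. First, your hope that the Koebe comparison constants tend to $1$ as $\Vert\varphi\Vert_{B}\to 0$ is false: already for $\Omega=\mathbb{D}$ one has $\rho_{\mathbb{D}}(w)\,\mathrm{dist}(w,\mathbb{S})=(1+|w|)^{-1}\in[\tfrac12,1]$, so routing the estimate through $\mathrm{dist}(\cdot,\Gamma)$ can never produce a bi-Lipschitz constant of the form $1+O(\Vert\varphi\Vert_{B})$; to get constants tending to $1$, as \cite[Lemma 2.5]{TT} provides, one must compare the hyperbolic densities directly rather than through the distance-to-boundary proxy. This does not harm the lemma as stated --- a uniform constant is all that is claimed, and all that is used later (e.g.\ in Proposition \ref{upper}) --- but the quantitative claim at the end of your second paragraph should be weakened. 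Second, in the Beurling-transform alternative, $L^{p}_{\mathrm{loc}}$ control of $\partial f^{\nu}-1$ plus Sobolev embedding gives H\"older convergence of $f^{\nu}$, not locally uniform convergence of $\partial f^{\nu}$ itself; keep your primary argument (the Ahlfors--Weill formula is algebraic in $F,F',F''$, which converge locally uniformly by continuous dependence on the Schwarzian), and note that once $\partial f^{\nu}\to1$, $\bar\partial f^{\nu}\to0$, and $f^{\nu}\to\mathrm{id}$ locally uniformly, the identities $\partial(f^{\nu})^{-1}=\overline{\partial f^{\nu}}/J_{f^{\nu}}$ and $\bar\partial(f^{\nu})^{-1}=-\bar\partial f^{\nu}/J_{f^{\nu}}$, evaluated at $z=(f^{\nu})^{-1}(w)$, dispose of the inverse map directly, with no need for a symmetry argument on its dilatation.
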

\begin{proof}
It is proved in \cite[Lemma 2.5]{TT}
that for every $\varepsilon > 0$, there exists $0 < \delta < 1$ such that for all 
$\nu \in \sigma(U^\infty(2))$
with $\|\nu\|_{\infty} < \delta$, we have that
\begin{equation*}\label{Teo}
 \bigg|\frac{|\partial f^{\nu}(z)|^2}{(1 - |f^{\nu}(z)|^2)^2} - \frac{1}{(1 - |z|^2)^2}  \bigg| < \frac{\varepsilon}{(1 - |z|^2)^2}
\end{equation*}
for every $z \in \mathbb{D}$, 
and the same inequality holds for $(f^{\nu})^{-1}$. 
Then, 
\begin{equation*}
\sqrt{1 - \varepsilon} < \frac{1 - |z|^2}{1 - |f^{\nu}(z)|^2} |\partial f^{\nu}(z)| < \sqrt{1 + \varepsilon}.   
\end{equation*}
This proves the first statement. Combined with the fact that $f^{\nu}$ converges to the identity map 
uniformly on $\mathbb{D}$ as $\varphi \to 0$, this inequality also proves the second statement.
\end{proof}

The continuity of the Carleson metric in the special case is obtained as follows.

\begin{theorem}\label{cont}
 The Carleson metric $m_C$ is continuous on the VMO Teich\-m\"ul\-ler space $T_v$.
\end{theorem}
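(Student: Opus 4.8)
The plan is to show that the Carleson metric $m_C$ is continuous as a function on the tangent bundle of $T_v$, that is, the map $(\varphi,\psi)\mapsto m_C(\varphi,\psi)=\Vert d_\varphi\widetilde R_\varphi\,\psi\Vert_{\mathcal B}$ depends continuously on $\varphi\in\beta(T_v)$ and $\psi\in\mathcal B(\mathbb D^*)$. Since the $\psi$-dependence is linear (hence trivially continuous once uniform operator bounds are in hand), the real issue is continuity in the base point $\varphi$. By translation invariance one may reduce to studying continuity near an arbitrary point, and the natural strategy is to use translation by $R_\tau$ to move any chosen base point to the origin, where the explicit Ahlfors--Weill section $\sigma$ and the estimates assembled just before the theorem are available. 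First I would write $d_\varphi\widetilde R_\varphi$ explicitly in terms of the projection $\Phi$ and the local section $\sigma$: near the origin we have $\widetilde R_\varphi=\beta\circ R_\tau\circ\beta^{-1}$ with $\tau=\beta^{-1}(\varphi)$, and $R_\tau$ is realized on Beltrami coefficients by the biholomorphic automorphism $r_\nu^{-1}$ for $\nu=\sigma(\varphi)$, so that the derivative $d_\varphi\widetilde R_\varphi$ can be expressed through the composition $d\Phi\circ d(r_\nu^{-1})\circ d\sigma$ evaluated at the relevant points.

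The key steps, in order, are as follows. First I would fix a base point $\varphi_0\in\beta(T_v)$ and, using the translation invariance $R_{\tau_0}^*m_C=m_C$ (which holds by the very definition of $m_C$ through $\widetilde R_\varphi$), reduce continuity at $\varphi_0$ to continuity of the operator-valued map $\varphi\mapsto d_\varphi\widetilde R_\varphi$ for $\varphi$ in a small $\mathcal B(\mathbb D^*)$-neighborhood of the origin, where $\tau=\pi(\sigma(\varphi))$ lies in $T_v$. Second, I would invoke Lemma \ref{convergence}: for $\nu=\sigma(\varphi)$ with $\varphi$ small, $f^\nu$ and $(f^\nu)^{-1}$ are bi-Lipschitz in the Poincar\'e metric, and crucially $\partial f^\nu,\partial(f^\nu)^{-1}\to 1$ locally uniformly as $\varphi\to 0$. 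This is precisely the analytic input that forces the pull-back/push-forward operators governing $r_\nu^{-1}$ on $\mathcal M(\mathbb D)$ (and correspondingly on $\mathcal B(\mathbb D^*)$ via $\Phi$) to converge to the identity. Third, I would combine this with the uniform bound $\Vert d\sigma\Vert\le\frac12(L+M)$ established above and the fact that $\Phi$ (equivalently $\beta$) is a holomorphic split submersion on $\mathcal M(\mathbb D)$ onto $\mathcal B(\mathbb D^*)$, so that all the composing derivatives are uniformly bounded operators depending continuously on $\varphi$. Concatenating these, the operator $d_\varphi\widetilde R_\varphi$ converges in operator norm to $d_0\widetilde R_0=\mathrm{id}$ as $\varphi\to 0$, which yields continuity of $\varphi\mapsto\Vert d_\varphi\widetilde R_\varphi\psi\Vert_{\mathcal B}$ uniformly for $\psi$ in the unit sphere, and hence joint continuity of $m_C$.

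The main obstacle I anticipate is controlling the Carleson norm of the push-forward under the change of variables induced by $f^\nu$: unlike the sup-norm, the Carleson norm is not preserved under arbitrary bi-Lipschitz maps, and one must verify that the pull-back operator from $\mathrm{CM}(\Omega)$-type measures back to $\mathrm{CM}(\mathbb D)$ (and its restriction to the vanishing class $\mathrm{CM}_0$) depends continuously, in operator norm, on the dilatation $\nu=\sigma(\varphi)$. The local uniform convergence $\partial f^\nu\to 1$ from Lemma \ref{convergence} is exactly tailored to this difficulty, but translating pointwise derivative convergence into operator-norm convergence of the Carleson push-forward requires care: I would handle it by splitting the domain into a large compact core (where the uniform convergence $\partial f^\nu\to1$ gives direct control of the measures) and a boundary collar (where the bi-Lipschitz bounds, together with the boundedness constant $M$ of the pull-back operator from \cite[Theorem 3.4]{WZ}, give uniform smallness). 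The restriction of the theorem to $T_v$ rather than all of $T_b$ is what makes this feasible, since the vanishing Carleson condition defining $\mathcal M_0(\mathbb D)$ and $\mathcal B_0(\mathbb D^*)$ provides the equicontinuity of the boundary contributions needed to pass from pointwise to uniform estimates.
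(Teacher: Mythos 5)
Your overall skeleton matches the paper's: reduce to the origin by translation invariance, factor $d_\varphi\widetilde R_\varphi = d_0\Phi\circ d_\nu r_\nu\circ d_\varphi\sigma$ through the Ahlfors--Weill section $\sigma$, invoke Lemma \ref{convergence} for the bi-Lipschitz control and the locally uniform convergence $\partial f^\nu\to 1$, and split $\mathbb D$ into a compact core and a boundary annulus. But your central claim --- that $d_\varphi\widetilde R_\varphi\to\mathrm{id}$ in \emph{operator norm}, ``uniformly for $\psi$ in the unit sphere'' --- is a genuine gap, and the justification you offer for it is false. You assert that the vanishing Carleson condition defining $\mathcal M_0(\mathbb D)$ and $\mathcal B_0(\mathbb D^*)$ ``provides the equicontinuity of the boundary contributions.'' It does not: membership in the vanishing class gives decay of the annulus mass $\Vert\lambda_\mu\chi_{A_r}\Vert_c$ for each individual $\mu$, but there is no common rate over the unit ball (exactly as elements of the unit ball of $c_0$ do not decay uniformly --- one can concentrate the Carleson mass of unit-norm $\psi_n\in\mathcal B_0(\mathbb D^*)$ at scales $\sim 1/n$ near $\mathbb S$). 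So the choice of the radius $r$ in your core/annulus splitting necessarily depends on $\psi$, and the operator-norm statement cannot be extracted this way; moreover, since you at one point allow $\psi\in\mathcal B(\mathbb D^*)$ rather than $\mathcal B_0(\mathbb D^*)$, your claim would yield continuity of $m_C$ on all of $T_b$, which is precisely what the theorem does not assert --- the restriction to $T_v$ enters because one needs $\mu=d\sigma(\psi)\in\mathcal M_0(\mathbb D)$ (and $\mu\circ(f^\nu)^{-1}\in\mathcal M_0(\mathbb D)$, via \cite[Proposition 3.5]{TWS}) to make the annulus term small at all.

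The fix is to weaken the goal, as the paper does: prove only the \emph{fixed-direction} convergence $\Vert d_\varphi\widetilde R_\varphi(\psi)-\psi\Vert_{\mathcal B}\to 0$ as $\varphi\to 0$ for each single $\psi\in\mathcal B_0(\mathbb D^*)$. There the $\psi$-dependent choice of $r$ is harmless: the annulus contribution is small by the vanishing property of $\lambda_\mu$ and of $\lambda_{\mu\circ(f^\nu)^{-1}}$, while on the compact core Lemma \ref{convergence} gives $d_\nu r_\nu(\mu)\to\mu$ locally uniformly (note this uses that $\mu=\sigma(\psi)$ is continuous on $\mathbb D$; locally uniform convergence of $f^\nu$ and its derivatives alone would not give sup-norm convergence of $\mu\circ(f^\nu)^{-1}$ for merely measurable $\mu$). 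Joint continuity at $(0,\psi_0)$ then follows from the triangle inequality $\Vert d_\varphi\widetilde R_\varphi(\psi)-\psi_0\Vert_{\mathcal B}\leqslant\Vert d_\varphi\widetilde R_\varphi(\psi)-\psi\Vert_{\mathcal B}+\Vert\psi-\psi_0\Vert_{\mathcal B}$ together with a locally uniform bound on $\Vert d_\varphi\widetilde R_\varphi\Vert$ near the origin (which the norm estimates for $\sigma$, $d_\nu r_\nu$, $d_0\Phi$ assembled before the theorem, or Proposition \ref{upper}, supply) --- no operator-norm convergence is needed.
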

\begin{proof}
By the invariance, the continuity of $m_C$ follows from that at the origin;
$m_C(\varphi,\psi) \to m_C(0,\psi_0)$ as $(\varphi,\psi) \to (0,\psi_0)$.
Moreover,
by 
\begin{equation*}
\|d_{\varphi}\widetilde R_{\varphi}(\psi) - \psi_0\|_{\mathcal{B}} \leqslant \|d_{\varphi}\widetilde R_{\varphi}(\psi) - \psi\|_{\mathcal{B}} +  \|\psi - \psi_0\|_{\mathcal{B}},  
\end{equation*}
it suffices to show that for each tangent vector $\psi \in\mathcal{B}_0(\mathbb{D}^{*})$, 
$\|d_{\varphi} \widetilde R_{\varphi}(\psi) - \psi\|_{\mathcal{B}}$
converges to $0$ as $\varphi$ tends to 0 in $\mathcal{B}(\mathbb{D}^{*})$. The derivative $d_{\varphi} \widetilde R_{\varphi}$ can be decomposed into $d_{\varphi} \widetilde R_{\varphi} = d_0\Phi \circ d_{\nu}r_{\nu}\circ d_{\varphi}\sigma$ for 
$\nu = \sigma (\varphi)$. 
Then, 
\begin{equation*}
 d_{\varphi} \widetilde R_{\varphi}(\psi)(z) = -\frac{6}{\pi} \int_{\mathbb{D}} \frac{d_{\nu}r_{\nu}(\mu)(w)}{(w - z)^4} dudv 
 \qquad (z \in \mathbb{D}^{*})
\end{equation*}
for $\mu = d\sigma(\psi)$. 
We conclude that 
\begin{equation*}
\begin{split}
 &\quad\ |d_{\varphi} \widetilde R_{\varphi}(\psi)(z) - \psi(z)|^2\rho_{\mathbb D^*}^{-3}(z) \\
 &\leqslant A \int_{\mathbb{D}}\frac{|d_{\nu}r_{\nu}(\mu)(w) - \mu(w)|^2}{1 - |w|^2}\frac{(1 - |w|^2)(|z|^2 - 1)}{|w - z|^4} dudv\\
 \end{split}
\end{equation*}
for some absolute constant $A>0$.

We consider the pull-back of this Carleson measure by the reflection $j$ with respect to $\mathbb S$.
It holds that 
\begin{equation*}\label{R}
\begin{split}
 & \quad|d_{\varphi}\widetilde R_{\varphi}(\psi)(j(\zeta)) - \psi(j(\zeta))|^2\rho_{\mathbb D^*}^{-3}(j(\zeta))|j_{\bar{\zeta}}(\zeta)| \\
 &\leqslant A \int_{\mathbb{D}}|d_{\nu}r_{\nu}(\mu)(w) - \mu(w)|^2\rho_{\mathbb D}(w)\frac{(1 - |w|^2)(1 - |\zeta|^2)}{|\bar{w}\zeta - 1|^4} dudv\\
 \end{split}
\end{equation*}
for $z=j(\zeta)$ $(\zeta \in \mathbb D)$. 
We will show that the Carleson norm of $|d_{\nu}r_{\nu}(\mu)(w) - \mu(w)|^2\rho_{\mathbb D}(w)$ converges to $0$ as $\varphi \to 0$ in $\mathcal{B}(\mathbb{D}^{*})$. Then, by \cite[Lemma 11]{CZ} and \cite{Zi89} (also see \cite[Theorem 1.1]{WZ}), 
the Carleson norm of $|d_{\varphi} \widetilde R_{\varphi}(\psi)(z) - \psi(z)|^2\rho_{\mathbb D^*}^{-3}(z)$ converges to $0$,
which implies that $\|d_{\varphi} \widetilde R_{\varphi}(\psi) - \psi\|_{\mathcal{B}} \to 0$ as $\varphi \to 0$ in $\mathcal{B}(\mathbb{D}^{*})$. 

By computation, we see that 
\begin{equation*}
 d_{\nu}r_{\nu}(\mu)(w) = \frac{\mu(\zeta)}{1 - |\nu(\zeta)|^2} \frac{\partial f^{\nu}(\zeta)}{\overline{\partial f^{\nu}(\zeta)}} 
\end{equation*}
for $w = f^{\nu}(\zeta)$. Hence,
\begin{equation*}
\begin{split}
|d_{\nu}r_{\nu}(\mu)(w)  - \mu(w)|^2 
&\leqslant 2|d_{\nu}r_{\nu}(\mu)(w)|^2 + 2|\mu(w)|^2\\
&\leqslant 2\frac{1}{(1 - \|\nu\|_{\infty}^{2})^2}|\mu \circ (f^{\nu})^{-1}(w)|^2 + 2|\mu(w)|^2.
\end{split}
\end{equation*}
We note that $\psi \in \mathcal{B}_0(\mathbb{D}^{*})$ implies $\mu \in \mathcal{M}_0(\mathbb{D})$. By \cite[Proposition 3.5]{TWS},
we also have $\mu \circ (f^{\nu})^{-1} \in \mathcal{M}_0(\mathbb{D})$. 
Then, for any $\varepsilon > 0$, we can choose some $0 < r < 1$ such that the Carleson norm of 
$|d_{\nu}r_{\nu}(\mu)(w)  - \mu(w)|^2\chi_{A_r}(w)\rho_{\mathbb D}(w)$ is less that $\varepsilon$. 
Here, $\chi_{A_r}$ denotes the characteristic function of $A_r=\{w \mid 1-r<|w|<1\}$.

Moreover, 
Lemma \ref{convergence} implies that $d_{\nu}r_{\nu}(\mu)(w)$ converges locally uniformly to $\mu(w)$ as $\varphi$ tends to $0$. Thus,  
$|d_{\nu}r_{\nu}(\mu)(w) - \mu(w)|^2 \rho_{\mathbb D}(w)< \varepsilon$
for every $w \in \overline{\Delta_r}={\mathbb D}-A_r$ if $\varphi$ is sufficiently close to $0$ in $\mathcal{B}(\mathbb{D}^{*})$. 
In this case, it is easy to see that the Carleson norm of 
$|d_{\nu}r_{\nu}(\mu)(w)  - \mu(w)|^2\chi_{\overline{\Delta_r}}\rho_{\mathbb D}(w)$ is less than $2\pi \varepsilon$. 
Therefore, the Carleson norm of 
$|d_{\nu}r_{\nu}(\mu)(w)  - \mu(w)|^2\rho_{\mathbb D}(w)$ is less than $(2\pi+1)\varepsilon$ if $\varphi$ is sufficiently close to $0$.
\end{proof}

By this theorem, we can say that the VMO Teich\-m\"ul\-ler space $T_v$ has a continuous Finsler structure with
the Carleson metric.

We close this section by mentioning the quotient metric on $p(T_b)=T_v \backslash T_b$ induced by $m_C$.
We note that $m_C$ is invariant under the group structure of $T_b$ (the transitive group action of $T_b$ is
isometric with respect to $m_C$)
and the projection $p$ is given by
taking the quotient of the subgroup $T_v \subset T_b$. Then, the quotient metric $\widehat m_C$ on 
$p(T_b) \cong \widehat \beta(p(T_b)) \subset \mathcal B_0(\mathbb D^*) \backslash \mathcal B(\mathbb D^*)$ is defined by
\begin{equation*}
\widehat m_C(\widehat \varphi,\widehat \psi)=\inf\{m_C(\varphi,\psi) \mid P(\varphi)=\widehat \varphi,\ P(\psi)=\widehat \psi\}
\end{equation*}
for any $\widehat \varphi \in \widehat \beta(p(T_b))$ and 
$\widehat \psi \in\mathcal B_0(\mathbb D^*) \backslash \mathcal B(\mathbb D^*)$.
Moreover, we see that $\widehat m_C$ is invariant under
every biholomorphic automorphism $\widehat R_\tau$ of $p(T_b)$ verified in Corollary \ref{biholomorphic}. 
The pseudo-distance induced by $\widehat m_C$ on $p(T_b)$ coincides with
\begin{equation*}
\widehat d_C(\widehat \varphi_1, \widehat \varphi_2)=\inf \{d_C(\varphi_1,\varphi_2) \mid P(\varphi_1)=\widehat \varphi_1,\ P(\varphi_2)=\widehat \varphi_2\},
\end{equation*}
and this is in fact a distance. See \cite{EGL2} and Remark \ref{distance} in the next section.

\section{Properties of the Carleson distance}

In this section, we prove further properties of the Carleson distance mentioned in the previous section.
First, we give the following estimate of the operator norm of the derivative 
$d_0 \Phi:{\mathcal L}(\mathbb D) \to {\mathcal B}(\mathbb D^*)$ explicitly.
This can be used alternatively in the proof of Theorem \ref{cont} to show the convergence of
the Carleson norm of $|d_{\varphi} \widetilde R_{\varphi}(\psi)(z) - \psi(z)|^2\rho_{\mathbb D^*}^{-3}(z)$.
We remark that this explicit estimate is not necessary for other arguments in this section,
but might serve as a refinement of the results.

\begin{proposition}
$\Vert d_0 \Phi \Vert \leqslant 24$.
\end{proposition}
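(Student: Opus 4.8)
The plan is to make the derivative $d_0\Phi$ explicit, estimate it pointwise by a weighted Cauchy--Schwarz inequality, and then invoke a Carleson transport estimate with a controlled constant. Specializing the integral representation from the proof of Theorem~\ref{cont} to $\nu=0$ (where $d_\nu r_\nu$ is the identity), one has for $\mu\in\mathcal{L}(\mathbb D)$ that
$$
d_0\Phi(\mu)(z)=-\frac{6}{\pi}\int_{\mathbb D}\frac{\mu(w)}{(w-z)^4}\,dudv,\qquad z\in\mathbb D^*.
$$
Because $\Vert\cdot\Vert_{\mathcal B}$ depends only on the Carleson measure $|d_0\Phi(\mu)(z)|^2\rho_{\mathbb D^*}^{-3}(z)\,dxdy$, and since $\Vert\lambda_\mu\Vert_c^{1/2}\leqslant\Vert\mu\Vert_*$, it suffices to prove the single inequality $\Vert\,|d_0\Phi(\mu)|^2\rho_{\mathbb D^*}^{-3}\,dxdy\,\Vert_c\leqslant 576\,\Vert\lambda_\mu\Vert_c$; extracting square roots then gives $\Vert d_0\Phi\Vert\leqslant 24$.

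First I would record the elementary identity $\int_{\mathbb D}|w-z|^{-4}\,dudv=\pi(|z|^2-1)^{-2}$ for $|z|>1$, obtained by expanding the holomorphic function $w\mapsto(w-z)^{-2}$ on $\mathbb D$ in a power series and integrating monomials term by term. Writing $|w-z|^{-4}=|w-z|^{-2}\cdot|w-z|^{-2}$ and applying Cauchy--Schwarz against this identity yields the pointwise bound
$$
|d_0\Phi(\mu)(z)|^2(|z|^2-1)^2\leqslant\frac{36}{\pi}\int_{\mathbb D}\frac{|\mu(w)|^2}{|w-z|^4}\,dudv,
$$
and hence, after inserting $|\mu(w)|^2=(1-|w|^2)|\mu(w)|^2\rho_{\mathbb D}(w)$,
$$
|d_0\Phi(\mu)(z)|^2\rho_{\mathbb D^*}^{-3}(z)\leqslant\frac{36}{\pi}\int_{\mathbb D}\frac{(|z|^2-1)(1-|w|^2)}{|w-z|^4}\,d\lambda_\mu(w).
$$
This is exactly the pointwise inequality already used in the proof of Theorem~\ref{cont}, now with the explicit constant $A=36/\pi$.

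It then remains to show that the transported measure $\big(\int_{\mathbb D}(|z|^2-1)(1-|w|^2)|w-z|^{-4}\,d\lambda_\mu(w)\big)\,dxdy$ on $\mathbb D^*$ has Carleson norm at most $16\pi\,\Vert\lambda_\mu\Vert_c$, for then $\frac{36}{\pi}\cdot 16\pi=576$ closes the argument. Following the proof of Theorem~\ref{cont}, I would pull this measure back to $\mathbb D$ by the reflection $j$, converting it into the Carleson-measure endomorphism with kernel $(1-|w|^2)(1-|\zeta|^2)|\bar w\zeta-1|^{-4}$, whose boundedness is the content of \cite[Theorem 1.1]{WZ} (see also \cite[Lemma 11]{CZ} and \cite{Zi89}). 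To pin down the numerical constant I would argue by hand: fix a Carleson box $Q$ over a boundary arc of length $h$, use Fubini to integrate the kernel in the $\zeta$-variable over $Q$ first, then split the remaining $w$-integral over the base arc of $Q$ and its successive dyadic enlargements, bound $\lambda_\mu$ on each annular piece by $\Vert\lambda_\mu\Vert_c$, and sum the resulting geometric series.

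The main obstacle is precisely this last, quantitative step. While the qualitative boundedness of the transport operator is available from the cited results, carrying the numerical constant through the dyadic summation and the change of variables $j$ tightly enough to reach $16\pi$---rather than some larger admissible value---is delicate. One must keep careful track of the differing geometry of Carleson boxes for $\mathbb D^*$ and for $\mathbb D$, together with the Jacobian weight $|j_{\bar\zeta}(\zeta)|$ produced by the reflection, since any looseness there would degrade the final bound from $24$ to a larger constant.
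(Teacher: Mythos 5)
Your first two steps coincide exactly with the paper's proof: the same integral representation $d_0\Phi(\mu)(z)=-\frac{6}{\pi}\int_{\mathbb D}\mu(w)(w-z)^{-4}\,dudv$, the same identity $\int_{\mathbb D}|w-z|^{-4}\,dudv=\pi\rho_{\mathbb D^*}^2(z)$, and the same Cauchy--Schwarz application with constant $36/\pi$. After that, however, you diverge, and the divergence is where the gap lies. You reduce everything to a single pure-Carleson transport estimate: that the measure $\big(\int_{\mathbb D}(|z|^2-1)(1-|w|^2)|w-z|^{-4}\,d\lambda_\mu(w)\big)dxdy$ has Carleson norm at most $16\pi\,\Vert\lambda_\mu\Vert_c$. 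You never prove this; you only sketch a dyadic scheme and candidly flag the constant-tracking as ``delicate.'' That is not a completed proof but a deferral of the decisive quantitative step, and the paper's own arithmetic gives concrete reason to doubt the target constant is reachable: in the paper's estimate of the near region $\mathbb D\cap\Delta(\zeta,5r/3)$ alone, the bound $I_2\leqslant 576\,r\,\Vert\lambda_\mu\Vert_c$ corresponds, after dividing out the prefactor $36/\pi$, to a transport constant of exactly $576\cdot\frac{\pi}{36}=16\pi$. In other words, a reasonably tight estimate of just the near part of the kernel already exhausts your entire budget, leaving nothing for the far region, whose contribution (your geometric series over dyadic annuli) is strictly positive in terms of $\Vert\lambda_\mu\Vert_c$. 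Unless you can substantially sharpen the near-region estimate, your route yields some constant strictly larger than $24$.

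The paper escapes this bind with a structural idea your plan forgoes: it does \emph{not} control the far contribution by the Carleson norm at all. It splits the $w$-integral at $\Delta(\zeta,5r/3)$ and charges the far term $I_1$ to $\Vert\mu\Vert_\infty^2$ via elementary sector geometry ($|w-z|\geqslant 2r/3$, a half-space reduction, and the sector $S$ of angle at most $\pi r$), obtaining $I_1/r\leqslant 162\pi\,\Vert\mu\Vert_\infty^2$ with no dyadic summation whatsoever; only the near term is charged to $\Vert\lambda_\mu\Vert_c$. The two pieces are then recombined using $\Vert\mu\Vert_*^2=(\Vert\mu\Vert_\infty+\Vert\lambda_\mu\Vert_c^{1/2})^2\geqslant\Vert\mu\Vert_\infty^2+\Vert\lambda_\mu\Vert_c$ and $162\pi<576$. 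Exploiting both components of the $\Vert\cdot\Vert_*$ norm in this way is what makes the constant $24$ attainable. If you want to salvage your approach, either carry out the dyadic computation and verify honestly what constant it produces (expect something larger than $16\pi$, hence a final bound worse than $24$), or adopt the paper's splitting and use $\Vert\mu\Vert_\infty$ where it is available --- which is precisely what the norm on $\mathcal{L}(\mathbb D)$ was designed to permit.
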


\begin{proof}
The derivative $d_0 \Phi$ can be represented by 
$$
\varphi(z) = d_0\Phi(\mu)(z) = -\frac{6}{\pi}\int_{\mathbb{D}}\frac{\mu(w)}{(w - z)^4} dudv \qquad (z \in \mathbb{D}^{*}).
$$
Applying the Cauchy-Schwarz inequality and the equation
$$
\int_{\mathbb{D}}\frac{dudv}{|w - z|^4} =\pi \rho_{\mathbb{D}^{*}}^{2}(z),
$$
we have
$$
|\varphi(z)|^2\rho_{\mathbb{D}^{*}}^{-3}(z) \leqslant \frac{36(|z|^2 - 1)}{\pi} \int_{\mathbb{D}} \frac{|\mu(w)|^2}{|w - z|^4} dudv.
$$
This shows that for every $\zeta \in \mathbb{S}$, 
\begin{equation*}
\begin{split}
\int_{\Delta(\zeta, r)\cap \mathbb{D}^{*}} |\varphi (z)|^2 \rho_{\mathbb{D}^{*}}^{-3}(z) dxdy & \leqslant 
\frac{36}{\pi} \int_{\Delta(\zeta, r)\cap \mathbb{D}^{*}}\Big(\int_{\mathbb{D}\setminus \overline{\Delta(\zeta, 5r/3)}}\frac{|\mu(w)|^2}{|w - z|^4} dudv\Big)(|z|^2 - 1)dxdy \\
& + \frac{36}{\pi} \int_{\Delta(\zeta, r)\cap \mathbb{D}^{*}}\Big(\int_{\mathbb{D}\cap \Delta(\zeta, 5r/3)}\frac{|\mu(w)|^2}{|w - z|^4}dudv\Big)(|z|^2 - 1)dxdy, \\
\end{split}
\end{equation*}
where $\Delta(\zeta, r)$ denotes the disk with center $\zeta$ and radius $r \in (0,2)$. 

For the first term $I_1$ in the right-hand side of the inequality above,
we note that $|w - z| \geqslant 2r/3$ and that $\mathbb{D}\setminus \overline{\Delta(\zeta, 5r/3)}$ is in the half-space
divided by a line passing through a given $z \in \Delta(\zeta, r)\cap \mathbb{D}^{*}$.
Moreover, $\Delta(\zeta, r)\cap \mathbb{D}^{*}$ is included in 
$S \cap \mathbb{D}^{*}$, where $S$ is a sector with center $0$, radius $1 + r$, and central angle at most
$\pi r$. Hence, we have that 
\begin{equation*}
 \begin{split}
I_1 & \leqslant \frac{36}{\pi}\Vert \mu \Vert_{\infty}^{2} \int_{S \cap \mathbb{D}^{*}}\Big( (|z|^2 - 1) \cdot \frac{1}{2}\int_{\{w \mid |w - z| \geqslant 2r/3\}}\frac{dudv}{|w - z|^4}\Big)dxdy\\
& \leqslant \frac{81}{2r^2}\Vert \mu \Vert_{\infty}^{2} \int_{S \cap \mathbb{D}^{*}}(|z|^2 - 1)dxdy\\
& \leqslant \frac{81}{2r^2}\Vert \mu \Vert_{\infty}^{2} \cdot \pi r \int_1^{1+r}(t^2-1)tdt\\
& \leqslant \frac{81\pi}{2} r\Big(\frac{r^2}{4}+r+1\Big) \Vert \mu \Vert_{\infty}^{2}
\leqslant 162\pi r \Vert \mu \Vert_{\infty}^{2}.
\end{split}   
\end{equation*}
For the second term $I_2$, similarly we have that
\begin{equation*}
 \begin{split}
I_2 & \leqslant \frac{36}{\pi} \int_{\mathbb{D}\cap \Delta(\zeta, 5r/3)}
\Big(\int_{S'\cap\mathbb{D}^{*}}\frac{|z|^2 - 1}{|w - z|^4}dxdy \Big)|\mu(w)|^2 dudv \\
& \leqslant \frac{36}{\pi} \int_{\mathbb{D}\cap \Delta(\zeta, 5r/3)}
\Big(\frac{3}{4}\int_{\{z \mid |w - z| \geqslant 1-|w|\}}\frac{2+r}{|w - z|^3}dxdy \Big)|\mu(w)|^2 dudv \\
& \leqslant 576 \cdot \frac{3}{5} \int_{\mathbb{D}\cap \Delta(\zeta, 5r/3)}
\frac{|\mu(w)|^2}{1 - |w|^2} dudv, \\
\end{split}
\end{equation*}
where $S'$ is a sector with center $w$ and central angle at most
$3\pi/2$.

From these estimates, we obtain that
\begin{equation*}
\Vert \varphi \Vert_{\mathcal{B}}^2 \leqslant \frac{1}{r}(I_1+I_2)
\leqslant 162 \pi \Vert \mu \Vert_{\infty}^{2}+576\Vert \lambda_{\mu} \Vert_c \leqslant 576\Vert \mu \Vert_*^2.
\end{equation*}
Thus, $\Vert \varphi \Vert_{\mathcal{B}} \leqslant 24\Vert \mu \Vert_{*}$,  which implies that $\Vert d_0 \Phi \Vert \leqslant 24$.
\end{proof}

In the following result, we obtain a locally uniform estimate for the operator norm $\Vert d_\varphi \widetilde R_\varphi \Vert$ when 
$\varphi \in \beta(T_b)$ is around the origin.
\begin{proposition}\label{upper}
The operator norm $\Vert d_\varphi \widetilde R_\varphi \Vert$ of the derivative 
$d_\varphi \widetilde R_\varphi:\mathcal{B}(\mathbb{D}^{*}) \to \mathcal{B}(\mathbb{D}^{*})$ at $\varphi$ 
is uniformly bounded from above and bounded away from zero for every $\varphi$ in an open ball 
$U(\delta_1) \subset \mathcal{B}(\mathbb{D}^{*})$ centered at the origin with some radius $\delta_1 \leq \delta_0$.
\end{proposition}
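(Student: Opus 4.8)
The plan is to exploit the chain-rule factorization already used in the proof of Theorem \ref{cont}, namely
$$
d_\varphi\widetilde R_\varphi=d_0\Phi\circ d_\nu r_\nu\circ\sigma,\qquad \nu=\sigma(\varphi),
$$
where I use that the Ahlfors--Weill section $\sigma$ is linear (so $d_\varphi\sigma=\sigma$) and that $r_\nu(\nu)=0$, so that the outer factor is exactly $d_0\Phi$. For the upper bound I would simply estimate the three factors. Since $\Vert\sigma\Vert\leqslant\frac12(L+M)$ and $\Vert d_0\Phi\Vert\leqslant 24$ are already in hand, the whole matter reduces to a uniform bound on the operator norm $\Vert d_\nu r_\nu\Vert$ on $\mathcal{L}(\mathbb{D})$ as $\varphi$ ranges over a small ball $U(\delta_1)$. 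Using the explicit formula $d_\nu r_\nu(\mu)(w)=\frac{\mu(\zeta)}{1-|\nu(\zeta)|^2}\frac{\partial f^\nu(\zeta)}{\overline{\partial f^\nu(\zeta)}}$ with $w=f^\nu(\zeta)$, the $L^\infty$-part is controlled by $(1-\Vert\nu\Vert_\infty^2)^{-1}$, which is uniformly close to $1$ because $\Vert\nu\Vert_\infty=\frac12\Vert\varphi\Vert_B\leqslant\frac12 L\Vert\varphi\Vert_{\mathcal{B}}$. The Carleson part of $\lambda_{d_\nu r_\nu(\mu)}$ is a transform of $\lambda_\mu$ under $f^\nu$, so its Carleson norm is governed by the Poincar\'e bi-Lipschitz constant of $f^\nu$ through the transformation results \cite[Theorems 3.1, 3.4]{WZ}; by Lemma \ref{convergence} these constants are uniformly bounded (and tend to $1$) on $U(\delta_1)$. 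This gives a uniform bound $\Vert d_\nu r_\nu\Vert\leqslant C_1$ and hence $\Vert d_\varphi\widetilde R_\varphi\Vert\leqslant 12(L+M)C_1$.

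For the lower bound I would use that $\widetilde R_\varphi$ is a biholomorphic automorphism, so $d_\varphi\widetilde R_\varphi$ is invertible with $(d_\varphi\widetilde R_\varphi)^{-1}=d_0(\widetilde R_\varphi^{-1})$, and it suffices to bound this inverse from above, since $\Vert d_\varphi\widetilde R_\varphi\Vert\geqslant\Vert(d_\varphi\widetilde R_\varphi)^{-1}\Vert^{-1}$. From the relation $\widetilde R_\varphi^{-1}\circ\Phi=\Phi\circ r_\nu^{-1}$ together with $r_\nu^{-1}(\mu)=\mu\ast\nu$ and $r_\nu^{-1}(0)=\nu$, I obtain the companion factorization
$$
(d_\varphi\widetilde R_\varphi)^{-1}=d_\nu\Phi\circ d_0 r_\nu^{-1}\circ\sigma.
$$
Here $\Vert d_\nu\Phi\Vert$ is uniformly bounded on $U(\delta_1)$ because $\Phi$ is holomorphic (so $d\Phi$ is locally bounded) and $\nu=\sigma(\varphi)$ stays in a bounded set; and $\Vert d_0 r_\nu^{-1}\Vert$ is controlled by the same bi-Lipschitz mechanism as $\Vert d_\nu r_\nu\Vert$, since $r_\nu^{-1}=r_{\nu^{-1}}$ is again a composition operator whose Carleson transformation norm is governed by the uniform bi-Lipschitz constant of $f^\nu$. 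Combining, $\Vert(d_\varphi\widetilde R_\varphi)^{-1}\Vert\leqslant\frac12(L+M)C_2C_3$ uniformly, so $\Vert d_\varphi\widetilde R_\varphi\Vert$ is bounded away from zero on $U(\delta_1)$.

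The main obstacle I anticipate is precisely the uniform control of the operator norms $\Vert d_\nu r_\nu\Vert$ and $\Vert d_0 r_\nu^{-1}\Vert$ on the Carleson--Beltrami space $\mathcal{L}(\mathbb{D})$, uniformly over $U(\delta_1)$. In contrast to the proof of Theorem \ref{cont}, where only the pointwise convergence $d_\nu r_\nu(\mu)\to\mu$ for each \emph{fixed} $\mu$ was needed, here I genuinely require a bound on the full operator norm; consequently the naive Neumann-series argument $\Vert d_\nu r_\nu-I\Vert\to 0$ is unavailable, since that difference does \emph{not} tend to zero in operator norm (for an oscillating $\mu$ the discrepancy between the values at $\zeta$ and at $f^\nu(\zeta)$ need not be small). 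The essential point is therefore to extract from \cite{WZ} the quantitative dependence of the Carleson-measure transformation norm on the bi-Lipschitz constant, and to feed in the uniform bi-Lipschitz bounds of Lemma \ref{convergence}; once $\delta_1\leqslant\delta_0$ is chosen small enough that these constants lie in a fixed interval around $1$, all three uniform constants $C_1,C_2,C_3$ exist and both estimates close up.
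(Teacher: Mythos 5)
Your proposal is correct and takes essentially the same route as the paper's own proof: the identical factorizations $d_\varphi\widetilde R_\varphi=d_0\Phi\circ d_\nu r_\nu\circ d_\varphi\sigma$ and $(d_\varphi\widetilde R_\varphi)^{-1}=d_\nu\Phi\circ d_0 r_{\nu^{-1}}\circ d_0\sigma$, the same uniform control of the Carleson-measure pull-back under $f^\nu$ via the bi-Lipschitz bounds of Lemma \ref{convergence}, and the same local boundedness of $\Vert d_\nu\Phi\Vert$ from holomorphy of $\Phi$. The only difference is bookkeeping in the citations: the paper obtains the pull-back constant from Semmes \cite[Lemma 4.8]{Se} (see also \cite[Proposition 3.5]{TWS}), where the constant depends on the bi-Lipschitz constant of $(f^\nu)^{-1}$ and on the strongly quasisymmetric constants of its boundary extension, the latter controlled through $\Vert\nu^{-1}\Vert_*$ by \cite{FKP} --- precisely the quantitative dependence you correctly identified as the crux.
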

\begin{proof}
For the upper estimate, we decompose 
$\widetilde R_\varphi$ into $\widetilde R_\varphi = \Phi \circ r_{\nu} \circ \sigma$, 
where $\nu = \sigma (\varphi) \in \mathcal M ({\mathbb{D}})$. 
Then $d_{\varphi} \widetilde R_{\varphi}= d_0\Phi \circ d_{\nu}r_{\nu}\circ d_{\varphi}\sigma$. 
Here, the Ahlfors-Weill section $\sigma$ is linear with $\Vert d\sigma \Vert \leqslant \frac{1}{2}(L + M)$ as before,
and $d_0\Phi$ is a bounded linear operator with $\Vert d_0\Phi \Vert \leqslant 24$. Hence, it suffices to
consider $d_{\nu}r_{\nu}$.

As before, the derivative $d_{\nu}r_{\nu}$ in direction $\mu \in {\mathcal L}(\mathbb D)$ is given by  
\begin{equation*}
d_{\nu}r_{\nu}(\mu)(w)= \frac{\mu(\zeta)}{1 - |\nu(\zeta)|^2} 
\frac{\partial f^{\nu}(\zeta)}{\overline{\partial f^{\nu}(\zeta)}} 
\end{equation*}
for $w = f^{\nu}(\zeta)$. Then,
\begin{equation*}
\begin{split}
\frac{|d_{\nu}r_{\nu}(\mu)(w)|^2}{1-|w|^2}& \leqslant
\frac{1}{1-\Vert \nu \Vert_\infty^2}\frac{1-|(f^\nu)^{-1}(w)|^2}{1-|w|^2}|\partial (f^\nu)^{-1}(w)|^{-1}\\
& \times \frac{|\mu((f^\nu)^{-1}(w))|^2}{1-|(f^\nu)^{-1}(w)|^2}|\partial (f^\nu)^{-1}(w)|.
\end{split}
\end{equation*}
Here, the first factor of the right-hand side of the above inequality is uniformly bounded whenever
$\Vert \nu \Vert_\infty$ is less than some positive constant by Lemma \ref{convergence}.
In particular, for every $\varphi \in U(\delta_1)$ ($\nu=\sigma(\varphi)$) with some $\delta_1 \leq \delta_0$,  
this is uniformly bounded.
The second factor of the right-hand side of the above inequality is defined to be the pull-back 
$((f^\nu)^{-1})^*\lambda_\mu$
of the Carleson measure $\lambda_\mu=\lambda_\mu(\zeta)d\xi d\eta$ on $\mathbb D$ by $(f^\nu)^{-1}$.

By Semmes \cite[Lemma 4.8]{Se} (see also \cite[Proposition 3.5]{TWS}), we see that
$\Vert ((f^\nu)^{-1})^*\lambda_\mu \Vert_c \leqslant C \Vert \lambda_\mu \Vert_c$ for some
constant $C$ depending only on the bi-Lipschitz constant of $(f^\nu)^{-1}$ and
the strongly quasisymmetric constants of the boundary extension of $(f^\nu)^{-1}$.
In fact, the former constant depends on $\Vert \nu^{-1} \Vert_\infty$ $(\leqslant \Vert \nu^{-1} \Vert_*)$ by
Lemma \ref{convergence} as we have seen above, and
the latter constants depend only on $\Vert \nu^{-1} \Vert_*$ by 
Fefferman, Kenig, and Pipher \cite{FKP} (see also \cite[Lemma 4.3]{TWS}).
Furthermore, $\Vert \nu^{-1} \Vert_*$ can be estimated in terms of $\Vert \nu \Vert_*$. 
Therefore, we see that the constant $C$ is uniformly bounded.
This implies that there exists some constant $\widetilde C$ such that
the Carleson norms satisfy 
$\Vert \lambda_{d_{\nu}r_{\nu}(\mu)} \Vert_c \leqslant \widetilde C \Vert \lambda_\mu \Vert_c$ for every
$\varphi \in U(\delta_1)$ ($\nu=\sigma(\varphi)$).
Combined with the fact that 
$\Vert d_{\nu}r_{\nu}(\mu) \Vert_\infty \leqslant (1-\Vert \nu \Vert_\infty^2)^{-1}\Vert \mu \Vert_\infty$,
this proves that the operator norm $\Vert d_{\nu}r_{\nu} \Vert$ is uniformly bounded.

For the lower estimate of the operator norm $\Vert d_\varphi \widetilde R_\varphi \Vert$, we consider 
the upper estimate of
$\Vert d_0\widetilde R_\varphi^{-1} \Vert=\Vert d_\varphi \widetilde R_\varphi \Vert^{-1}$ by using the decomposition
$\widetilde R_\varphi^{-1} = \Phi \circ r_{\nu^{-1}} \circ \sigma$. Correspondingly, the derivative is 
$d_{0} \widetilde R_{\varphi}^{-1}= d_{\nu}\Phi \circ d_{0}r_{\nu^{-1}}\circ d_{0}\sigma$.
We know that $\Vert d\sigma \Vert \leqslant \frac{1}{2}(L + M)$ as before. Moreover, we have
the derivative $d_{0}r_{\nu^{-1}}$ in direction $\mu \in {\mathcal L}(\mathbb D)$ as
\begin{equation*}
\begin{split}
d_{0} r_{\nu^{-1}}(\mu)(w)&=\mu(\zeta)(1-|\nu^{-1}(\zeta)|^2) 
\frac{\partial (f^{\nu})^{-1}(\zeta)}{\overline{\partial (f^{\nu})^{-1}(\zeta)}}\\ 
&=\mu(f^\nu(w))(1-|\nu(w)|^2) 
\frac{\overline{\partial f^{\nu}(w)}}{\partial f^{\nu}(w)} \qquad (w=(f^{\nu})^{-1}(\zeta)).
\end{split}
\end{equation*}
Then, by a similar argument as before, we can prove that 
the operator norm $\Vert d_{0} r_{\nu^{-1}} \Vert$ is uniformly bounded for every
$\varphi \in U(\delta_1)$ ($\nu=\sigma(\varphi)$) by replacing $\delta_1$ with a smaller constant if necessary.

The locally uniform boundedness of the operator norm $\Vert d_{\nu}\Phi \Vert$ is a consequence of the holomorphy of $\Phi$.
This is a general argument but for the completeness, we review it here.
The derivative of $\Phi$ of the second order is the derivative of
$d\Phi:{\mathcal M}(\mathbb D) \to L({\mathcal L}(\mathbb D), {\mathcal B}(\mathbb D^*))$ given by the correspondence
$\nu \mapsto d_\nu \Phi$, where $L({\mathcal L}(\mathbb D), {\mathcal B}(\mathbb D^*))$ is the Banach space of
bounded linear operators ${\mathcal L} \to {\mathcal B}(\mathbb D^*)$ with respect to the operator norm.
Then, the property that $d\Phi$ is differentiable at $0$ is equivalent to the existence of a bounded linear operator
$A:\mathcal L \to L({\mathcal L}(\mathbb D), {\mathcal B}(\mathbb D^*))$ such that
$$
\Vert d_\nu \Phi-d_0 \Phi-A\nu \Vert=o(\Vert \nu \Vert_*)  \quad (\Vert \nu \Vert_* \to 0).
$$
Therefore, $\Vert d_\nu \Phi \Vert \leqslant \Vert d_0 \Phi \Vert +\Vert A \Vert \Vert \nu \Vert_* +o(\Vert \nu \Vert_*)$,
which yields the locally uniform boundedness of $\Vert d_\nu \Phi \Vert$.
\end{proof}

\begin{remark}\label{distance}
{\rm
To make the arguments in this section precise, we should note here that
the estimate of $\Vert d_\varphi \widetilde R_\varphi \Vert$ as in Proposition \ref{upper}
guarantees a locally uniform comparison of the metric with the norm of the Banach space.
Then, the pseudo-distance induced by
the Carleson metric is a distance and it defines the same topology as the original one on $T_b$.
}
\end{remark}

The completeness of the Carleson distance then follows from this  proposition.
\begin{theorem}\label{complete}
 The Carleson distance $d_C$ is complete on the BMO Teich\-m\"ul\-ler space $T_b$.
\end{theorem}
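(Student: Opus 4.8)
The plan is to reduce the global completeness statement to a purely local comparison between the Carleson metric and the norm of $\mathcal{B}(\mathbb{D}^{*})$ near the origin, and then to exploit the completeness of the Banach space $\mathcal{B}(\mathbb{D}^{*})$ together with the translation invariance of $m_C$. Since $m_C$ is translation-invariant, each right translation $\widetilde R_\sigma$ is a biholomorphic isometry for $m_C$, hence a $d_C$-isometry of $\beta(T_b)$; therefore it suffices to control $d_C$ on a neighborhood of the origin and transport every conclusion by these isometries. Throughout I identify $T_b$ with the domain $\beta(T_b) \subset \mathcal{B}(\mathbb{D}^{*})$.

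First I would upgrade Proposition \ref{upper} to a two-sided pointwise comparison of $m_C$ with $\Vert\cdot\Vert_{\mathcal{B}}$ on the ball $U(\delta_1)$. The uniform upper bound $\Vert d_\varphi \widetilde R_\varphi \Vert \leqslant C_1$ gives $m_C(\varphi,\psi) \leqslant C_1\Vert \psi \Vert_{\mathcal{B}}$ at once. For the matching lower bound I would use that $\widetilde R_\varphi(\varphi)=0$, so that $d_\varphi \widetilde R_\varphi$ and $d_0\widetilde R_\varphi^{-1}$ are inverse operators; the uniform upper bound on $\Vert (d_\varphi \widetilde R_\varphi)^{-1}\Vert=\Vert d_0\widetilde R_\varphi^{-1}\Vert$ established in the proof of Proposition \ref{upper} then furnishes a constant $c_1>0$ with $m_C(\varphi,\psi)\geqslant c_1\Vert \psi\Vert_{\mathcal{B}}$ for every $\varphi \in U(\delta_1)$ and every $\psi$. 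I would also record that $U(\delta_0)\subset \beta(T_b)$, which follows because the Ahlfors--Weill section $\sigma$ maps $U(\delta_0)$ into $\mathcal{M}(\mathbb{D})$ with $\Phi\circ\sigma=\mathrm{id}$; thus the whole comparison takes place inside the convex ball $U(\delta_1)\subset\beta(T_b)$.

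The main point, and the step I expect to be the principal obstacle, is to convert this pointwise comparison of metrics into a comparison of the distances $d_C$ and $\Vert\cdot\Vert_{\mathcal{B}}$, because $d_C$ is defined as an infimum of Carleson lengths over all curves in $\beta(T_b)$, which a priori may leave $U(\delta_1)$, where the lower bound $m_C\geqslant c_1\Vert\cdot\Vert_{\mathcal{B}}$ is no longer available. To handle this I would use an annulus-crossing argument: if $\varphi_1,\varphi_2\in U(\delta_1/2)$ and $d_C(\varphi_1,\varphi_2)<c_1\delta_1/2$, then any curve $\gamma$ with $l_C(\gamma)$ sufficiently close to $d_C(\varphi_1,\varphi_2)$ cannot exit $U(\delta_1)$, for otherwise the portion of $\gamma$ inside $U(\delta_1)$ would cross the annulus $\{\delta_1/2\leqslant\Vert\cdot\Vert_{\mathcal{B}}\leqslant\delta_1\}$ and hence have norm-length at least $\delta_1/2$, forcing $l_C(\gamma)\geqslant c_1\delta_1/2$, a contradiction. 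Since such a near-minimizing curve stays in $U(\delta_1)$, the pointwise lower bound applies along it and gives $\Vert\varphi_1-\varphi_2\Vert_{\mathcal{B}}\leqslant c_1^{-1}d_C(\varphi_1,\varphi_2)$. Combined with the straight-segment estimate $d_C(\varphi_1,\varphi_2)\leqslant C_1\Vert\varphi_1-\varphi_2\Vert_{\mathcal{B}}$ inside the convex ball, this shows that $d_C$ and $\Vert\cdot\Vert_{\mathcal{B}}$ are bi-Lipschitz equivalent on a $d_C$-ball about the origin, which incidentally reproves the topological assertion of Remark \ref{distance}.

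Finally I would run the standard completeness argument. Given a $d_C$-Cauchy sequence $\{\varphi_n\}$, I choose $N$ so that $d_C(\varphi_n,\varphi_m)$ lies below the threshold $c_1\delta_1/2$ for all $n,m\geqslant N$, and set $\psi_n=\widetilde R_{\varphi_N}(\varphi_n)$, so that $\psi_N=0$ and, by the isometry property, $\{\psi_n\}_{n\geqslant N}$ is $d_C$-Cauchy and lies in a small $d_C$-ball about the origin, hence in $\overline{U(\delta_1/2)}\subset\beta(T_b)$. By the bi-Lipschitz equivalence above, $\{\psi_n\}$ is Cauchy in $\Vert\cdot\Vert_{\mathcal{B}}$, so completeness of $\mathcal{B}(\mathbb{D}^{*})$ yields a norm limit $\psi_\infty\in\overline{U(\delta_1/2)}\subset\beta(T_b)$, and the segment estimate gives $d_C(\psi_n,\psi_\infty)\to 0$. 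Transporting back by the $d_C$-isometry $\widetilde R_{\varphi_N}^{-1}$, the original sequence converges in $d_C$ to $\widetilde R_{\varphi_N}^{-1}(\psi_\infty)\in\beta(T_b)$, which proves that $d_C$ is complete.
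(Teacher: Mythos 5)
Your proof is correct and takes essentially the same route as the paper: a two-sided local comparison of $m_C$ with $\Vert\cdot\Vert_{\mathcal{B}}$ on $U(\delta_1)$ via Proposition \ref{upper}, translation of the Cauchy tail to the origin by the isometric right translations, completeness of the Banach space $\mathcal{B}(\mathbb{D}^{*})$, and the straight-segment estimate to return to $d_C$-convergence. The one difference is that you spell out the annulus-crossing argument showing near-minimizing curves cannot exit $U(\delta_1)$ --- a step the paper leaves implicit in its appeal to the lower derivative bound and Remark \ref{distance} --- so your write-up is, if anything, more complete on that point.
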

\begin{proof}
For any $\varphi_0$, $\varphi_1$ in $U(\delta_1) \subset \mathcal{B}(\mathbb{D}^{*})$, we choose the segment $\gamma = \{t\varphi_1 + (1 - t)\varphi_0\}_{t \in [0, 1]}$ connecting $\varphi_0$ and $\varphi_1$. Then, the Carleson length $l_C(\gamma)$ of $\gamma$ is given by 
$$
l_C(\gamma) = \overline{\int_0^1} m_C(t\varphi_1 + (1 - t)\varphi_0,\varphi_1 - \varphi_0)\, dt.
$$
Proposition \ref{upper} asserts that there is a constant $K$ such that
$$
\Vert d_{t\varphi_1 + (1 - t)\varphi_0} \widetilde R_{t\varphi_1 + (1 - t)\varphi_0}(\varphi_1 - \varphi_0) \Vert_{\mathcal{B}} \leqslant K \Vert \varphi_1 - \varphi_0 \Vert_{\mathcal{B}}. 
$$
Thus, we see that $l_C(\gamma) \leqslant K \Vert \varphi_1 - \varphi_0 \Vert_{\mathcal{B}}$. Then,
\begin{equation}
d_C(\beta^{-1}(\varphi_1), \beta^{-1}(\varphi_0)) \leqslant K \Vert \varphi_1 - \varphi_0 \Vert_{\mathcal B}. \tag{$\ast\ast$}
\end{equation}

We consider any Cauchy sequence in $(T_b, d_C)$. It suffices to consider its tail whose diameter can be arbitrary small. As the group 
of the right translations $\{R_\tau\}$ acts isometrically and transitively on $T_b$, we may assume that the tail of the Cauchy sequence is contained in $\beta^{-1}(U(\delta_1)))$. 
From the lower estimate of the derivative as in Proposition \ref{upper}, we see that
the Bers embedding of the Cauchy sequence is a Cauchy sequence, which is
a convergent sequence with respect to the norm $\Vert \cdot \Vert_{\mathcal{B}}$. 
Hence, $(\ast\ast)$ implies that the Cauchy sequence also converges with respect to $d_C$.
\end{proof}

We compare the Teich\-m\"ul\-ler metric and the Carleson metric.
The Teich\-m\"ul\-ler metric $m_T$ is given by defining a norm of a tangent vector
$\psi \in B(\mathbb D^*)$ at the base point of the universal Teich\-m\"ul\-ler space $T \cong \beta(T) \subset B(\mathbb D^*)$.
The norm of $\psi$ is the operator norm of the bounded linear functional
\begin{equation*}
H(\psi):A^1(\mathbb D^*) \to \mathbb C, \quad \varphi \mapsto \int_{\mathbb D^*} \varphi(z) \overline{\psi(z)} \rho_{\mathbb D^*}^{-2}(z)dxdy,
\end{equation*}
where $A^1(\mathbb D^*)$ is the Banach space of integrable holomorphic quadratic differentials on $\mathbb D^*$.
The operator norm $\Vert H(\psi) \Vert$ is comparable with $\Vert \psi \Vert_B$, and clearly
$\Vert H(\psi) \Vert \leqslant \Vert \psi \Vert_B$.
At any point $\varphi \in \beta(T)$, the Teich\-m\"ul\-ler metric is given by
$m_T(\varphi,\psi)=\Vert H(d_\varphi \widetilde R_\varphi \psi) \Vert$.
The distance induced by this metric is the Teich\-m\"ul\-ler distance $d_T$.
We consider the restriction of $d_T$ to the BMO Teich\-m\"ul\-ler space $T_b$. 

\begin{remark}
{\rm
For a smooth curve $\gamma=\gamma(t)$ $(a \leqslant t \leqslant b)$ in $\beta(T_b)$, the Teich\-m\"ul\-ler length of $\gamma$ is defined by
\begin{equation*}
l_T(\gamma)=\int_a^b m_T(\gamma(t),\dot \gamma(t))dt.
\end{equation*}
Then, the infimum of $l_T(\gamma)$ taken over all smooth curves in $T_b \cong \beta(T_b)$ connecting two points defines 
an inner distance $d^i_T$ between them, which clearly satisfies $d_T \leqslant d^i_T$.
}
\end{remark}

\begin{proposition}\label{comparison}
There exists a constant $L>0$ such that $m_T \leqslant L m_C$ on $T_b$.
Hence, $d_T \leqslant L d_C$ on $T_b$.
\end{proposition}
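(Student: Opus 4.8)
The plan is to reduce the infinitesimal inequality $m_T \leqslant L m_C$ to the two norm comparisons already recorded in the excerpt, both applied to the \emph{same} tangent vector. The essential observation is that both metrics are built from the identical bounded linear operator $d_\varphi \widetilde R_\varphi$: at a point $\varphi \in \beta(T_b)$ and for a tangent direction $\psi \in \mathcal B(\mathbb D^*)$, set $\eta = d_\varphi \widetilde R_\varphi \psi$. Since $\widetilde R_\varphi$ is a biholomorphic automorphism of $\beta(T_b) \subset \mathcal B(\mathbb D^*)$, its derivative carries $\mathcal B(\mathbb D^*)$ into itself, so $\eta \in \mathcal B(\mathbb D^*) \subset B(\mathbb D^*)$; in particular $\eta$ lies in the domain of the functional $H$ and possesses both a $\mathcal B$-norm and a $B$-norm.

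With this, the infinitesimal comparison is a one-line chain. Using first the bound $\Vert H(\eta)\Vert \leqslant \Vert \eta \Vert_B$ recalled just before the proposition, and then the norm inequality $\Vert \eta \Vert_B \leqslant L \Vert \eta \Vert_{\mathcal B}$ from \cite[Lemma 4.1]{SW} (with the same absolute constant $L$ fixed in Section 6), I would write
\begin{equation*}
m_T(\varphi,\psi) = \Vert H(d_\varphi \widetilde R_\varphi \psi)\Vert \leqslant \Vert d_\varphi \widetilde R_\varphi \psi \Vert_B \leqslant L\,\Vert d_\varphi \widetilde R_\varphi \psi \Vert_{\mathcal B} = L\, m_C(\varphi,\psi).
\end{equation*}
This holds at every $\varphi \in \beta(T_b)$ and for every $\psi \in \mathcal B(\mathbb D^*)$, which is precisely the assertion $m_T \leqslant L m_C$ on $T_b$.

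For the distance statement I would integrate along curves. For any smooth curve $\gamma$ in $\beta(T_b)$ joining two points, the pointwise bound $m_T(\gamma(t),\dot\gamma(t)) \leqslant L\, m_C(\gamma(t),\dot\gamma(t))$ yields, after passing to the upper integral (so that the ordinary integral defining $l_T$ is dominated by the upper integral defining $l_C$),
\begin{equation*}
l_T(\gamma) \leqslant L\, l_C(\gamma).
\end{equation*}
Taking the infimum over all such curves gives $d^i_T \leqslant L\, d_C$ for the inner Teich\-m\"ul\-ler distance, and since $d_T \leqslant d^i_T$ as noted in the Remark preceding the proposition, I conclude $d_T \leqslant L\, d_C$ on $T_b$.

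The argument is essentially formal, so I do not expect a serious obstacle; the only points requiring care are bookkeeping ones. I must make sure $d_\varphi \widetilde R_\varphi$ is genuinely $\mathcal B$-valued (guaranteed by the biholomorphy of $\widetilde R_\varphi$ on $\beta(T_b)$), that the functional $H$ and the norm $\Vert\cdot\Vert_B$ are legitimately applied to $\eta \in \mathcal B(\mathbb D^*) \subset B(\mathbb D^*)$, and, in the passage from metrics to distances, that the mismatch between the ordinary integral in $l_T$ and the upper integral in $l_C$ is handled correctly — the pointwise inequality together with monotonicity of the upper integral settles this. Finally, invoking $d_T \leqslant d^i_T$ bridges the gap between the inner distance produced by integration and the Teich\-m\"ul\-ler distance named in the statement.
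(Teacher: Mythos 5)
Your proof is correct and follows essentially the same route as the paper: the paper likewise combines $\Vert H(\psi)\Vert \leqslant \Vert \psi \Vert_B$ with the inequality $\Vert \psi \Vert_B \leqslant L \Vert \psi \Vert_{\mathcal B}$ from \cite[Lemma 4.1]{SW} and then lets ``the assertion follow,'' which is exactly your pointwise chain applied to $d_\varphi \widetilde R_\varphi \psi$ followed by integration along curves. Your explicit bookkeeping (that $d_\varphi \widetilde R_\varphi$ is $\mathcal B(\mathbb D^*)$-valued, the upper-integral comparison, and the bridge $d_T \leqslant d_T^i$) merely fills in details the paper leaves implicit.
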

\begin{proof}
It was proved in \cite[Lemma 4.1]{SW} that there is some constant $L$ such that
$\Vert \psi \Vert_B \leqslant L \Vert \psi \Vert_{\mathcal B}$ 
for every $\psi \in \mathcal{B}(\mathbb D^*)$. 
Combined with $\Vert H(\psi) \Vert \leqslant \Vert \psi \Vert_B$, 
it follows that $\Vert H(\psi) \Vert \leqslant L \Vert \psi \Vert_{\mathcal B}$, and then
the assertion follows.
\end{proof}

It was shown by Fan and Hu \cite{FH} that the Kobayashi distance $d_K$ defined on the complex manifold $T_v$
coincides with the restriction of the Teich\-m\"ul\-ler distance $d_T$. In fact, $d_K=d_T^i=d_T$ on $T_v$.
Then, by Proposition \ref{comparison},
we have $d_K \leqslant L d_C$ on the VMO Teich\-m\"ul\-ler space $T_v$.
However, $d_T$ and $d_C$ are not comparable, that is, there is no inequality of the opposite direction
either for $T_b$ or for $T_v$. This is because the Carleson distance $d_C$ is complete in $T_b$ by Theorem \ref{complete}
and so is in the closed subspace $T_v$, but $d_T$ is not complete either in $T_b$ or in $T_v$.
In fact, the closure of $T_v$ in the universal Teich\-m\"ul\-ler space $(T,d_T)$ is $T_0$, the little subspace
given by vanishing Beltrami coefficients (asymptotically conformal maps), which contains an element not belonging to $T_b$.

\end{document}